\numberwithin{equation}{section}
\newtheorem{theorem}{Theorem}[section]
\newtheorem{proposition}[theorem]{Proposition}
\newtheorem{lemma}[theorem]{Lemma}
\newtheorem{corollary}[theorem]{Corollary}
\theoremstyle{definition}
\theoremstyle{remark}
\newtheorem{maintheorem}{\rm {\bf Theorem}}
\renewcommand{\hom}{\operatorname{Hom}}
\renewcommand{\ker}{\operatorname{Ker}}
\newcommand{\Z}{\mathbb{Z}}
\newcommand{\C}{\mathbb{C}}
\newcommand{\proj}{{\mathbb P}}
\newcommand{\SL}{{\rm SL}}
\newcommand{\GL}{{\rm GL}}
\newcommand{\PGLPGL}{{\rm PGL}_2\times{\rm PGL}_2}
\newcommand{\SLSL}{{\rm SL}_2\times{\rm SL}_2}
\newcommand{\GLGL}{{\rm GL}_2\times{\rm GL}_2}
\newcommand{\Oline}{\mathcal{O}_{{\mathbb P}^{1}}}
\newcommand{\sheaf}{\mathcal{O}}
\begin{document}

\title[]{The rationality of the moduli spaces of trigonal curves}
\author[]{Shouhei Ma}
\thanks{Supported by Grant-in-Aid for Scientific Research (S), No 22224001.} 
\address{Graduate~School~of~Mathematics, Nagoya~University, Nagoya 464-8604, Japan}
\email{ma@math.nagoya-u.ac.jp}
\subjclass[2000]{Primary 14H45, Secondary 14H10, 14E08, 14L30}
\keywords{trigonal curve, rationality, ${\SLSL}$, bi-transvectant} 
\maketitle 

\begin{abstract}
The moduli spaces of trigonal curves are proven to be rational when the genus is divisible by $4$. 
\end{abstract}

\maketitle


\section{Introduction}\label{sec: intro}

A smooth projective curve is called \textit{trigonal} if it carries a free $g_3^1$. 
When the curve has genus $\geq5$, such a pencil is unique if it exists. 
The object of our study is the moduli space $\mathcal{T}_g$ of trigonal curves of genus $g\geq5$. 
This space has been proven to be rational when $g\equiv2 \; (4)$ by Shepherd-Barron \cite{SB1}, 
and when $g$ is odd in \cite{Ma1}. 
In the present article we prove that $\mathcal{T}_g$ is rational in the remaining case $g\equiv0 \; (4)$, 
completing the following. 


\begin{maintheorem}
The moduli space $\mathcal{T}_g$ of trigonal curves of genus $g$ is rational for every $g\geq5$. 
\end{maintheorem}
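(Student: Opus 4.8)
\emph{Proof plan.}
The plan is to treat only the remaining case $g\equiv 0\pmod 4$, the cases $g$ odd and $g\equiv 2\pmod 4$ being already known, and to reduce it to an invariant‑theoretic statement. Fix such a $g$ and set $m=(g+2)/2$, which is an \emph{odd} integer $\ge 5$. Since $g$ is even, a general trigonal curve $C$ of genus $g$ has Maroni invariant $0$, hence is realised as a smooth divisor of bidegree $(3,m)$ on $\mathbb F_0=\proj^1\times\proj^1$ whose second projection is its (unique, as $g\ge 5$) trigonal pencil; and because $3\ne m$, an isomorphism between two such curves is induced by an automorphism of $\proj^1\times\proj^1$ respecting each ruling, i.e.\ by an element of $\PGLPGL$. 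Writing $V_d=H^0(\proj^1,\sheaf(d))$, this yields a birational equivalence
\[
\mathcal T_g\ \sim\ \proj(V_3\otimes V_m)\big/\PGLPGL ,\qquad m=\tfrac{g+2}{2},
\]
with $\PGLPGL$ acting generically freely. By the no‑name lemma it will then suffice to show that the field $\C(V_3\otimes V_m)^{\SLSL}$ is purely transcendental over $\C$.

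To this end I would view $F\in V_3\otimes V_m$ as a binary cubic in the first variable with coefficients in $V_m$, and run the covariant theory of the binary cubic relative to the second $\SL_2$, systematically in terms of bi‑transvectants $(\,\cdot\,,\cdot\,)_{(p,q)}$. The relevant covariants are the Hessian $\mathrm{He}(F)=(F,F)_{(2,0)}\in V_2\otimes V_{2m}$, the cubic covariant $\mathrm T(F)=(F,\mathrm{He}(F))_{(1,0)}\in V_3\otimes V_{3m}$ and the discriminant $\Delta(F)\in V_{4m}$, tied together by the classical syzygy of the binary cubic, now read as an identity of sections on $\proj^1\times\proj^1$. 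Using the syzygy one shows that a general $F$ is recovered from $\mathrm{He}(F)$ — equivalently from the double cover of $\proj^1$ it cuts out — together with one extra rational parameter; in other words $F\mapsto\mathrm{He}(F)$ realises $\proj(V_3\otimes V_m)$, $\SLSL$‑equivariantly and birationally, as a bundle with rational fibres over a locally closed subvariety $\mathcal H\subseteq\proj(V_2\otimes V_{2m})$. After one more reduction of the same kind the no‑name lemma reduces the problem to the rationality of $\mathcal H\big/\PGLPGL$, a moduli space of $(2,2m)$‑curves on $\proj^1\times\proj^1$.

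I would then iterate on $\mathcal H$: here the first $\SL_2$ acts through $\PGL_2$ on $V_2=\mathrm{Sym}^2$, and a further family of bi‑transvectant covariants (formed from the discriminant in the first variable and its joint invariants with the equation of the curve) presents $\mathcal H\big/\SL_2$ as a vector bundle over a space of binary forms in the second variable, the residual $\SL_2$ acting there in the standard way. What is left is to split off this residual $\SL_2$, and this is exactly where the hypothesis $4\mid g$, i.e.\ the oddness of $m$, is used decisively: the covariants one uses to fix the residual Borel subgroup when $m$ is even — the situation in Shepherd‑Barron's treatment of $g\equiv 2$ — behave differently for odd $m$ and must be replaced by covariants of appropriate (odd) degree in the coefficients. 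With the correct choice, the maximal torus and then the unipotent radical are peeled off in turn, the base of the last bundle collapses to a point, and rationality follows.

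I expect the main obstacle to be precisely this last choice of covariants: one must exhibit, for every odd $m$, an explicit system of $\SLSL$‑covariants whose common zero locus provides a rational section of each bundle in the above chain, so that all the successive no‑name reductions actually go through. Concretely this is a non‑degeneracy assertion — that certain matrices of transvectants attain maximal rank — which must hold uniformly in $m$; proving it is the technical core, and the bi‑transvectant formalism is what makes the verification uniform in the genus.
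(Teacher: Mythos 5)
Your reduction to invariant theory is correct and matches the paper: for $g=4N$ a general trigonal curve is a $(3,m)$-curve on $\proj^1\times\proj^1$ with $m=2N+1$ odd, giving $\mathcal T_g\sim\proj V_{3,m}/\SLSL$. But the route you propose from there has a genuine gap. The Hessian map $F\mapsto\mathrm{He}(F)=(F,F)_{(2,0)}$ is \emph{quadratic} in $F$, so its fibres are not linear subspaces and the result is not an $\SLSL$-vector bundle; the no-name lemma, which you invoke, requires a $G$-linearized \emph{vector} bundle, and ``bundle with rational fibres'' is not enough for that conclusion. Worse, the base $\mathcal H$ is an unspecified locally closed subvariety of $\proj(V_2\otimes V_{2m})$ (note $\dim V_2\otimes V_{2m}=6m+3>4m+4=\dim V_3\otimes V_m$, so $\mathcal H$ is a proper, a priori complicated, subvariety), and $\mathcal H/\PGLPGL$ is not visibly simpler than the quotient you started with. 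The subsequent ``iterate, then peel off the torus and unipotent radical'' plan is not carried out at all; you yourself flag the non-degeneracy of the needed covariants as the technical core, and that core is exactly what is missing.

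The paper takes a structurally different path that avoids these problems. It uses a single bi-transvectant $T=T^{(r,s)}\colon V_{3,b}\times V_{a',b'}\to V_{a'',b''}$, which is \emph{bilinear}; fixing $v$, the map $T(v,\cdot)$ is linear, so $v\mapsto\ker T(v,\cdot)$ is a rational map $V_{3,b}\dashrightarrow G(c,V_{a',b'})$ whose generic fibre is a linear space. This realises $V_{3,b}$ as a genuine $\SLSL$-linearized vector bundle over a Grassmannian, to which the no-name lemma applies after twisting by $\det$ of the tautological subbundle to kill $(\pm1,\mp1)$ (here the choice of $a',b',c$ all odd is decisive — your instinct that odd degrees are needed is correct, but in the paper it is made precise and local to the linearization issue). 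The base $G(c,V_{a',b'})$ is a Grassmannian, not an unknown $\mathcal H$, and its $\SLSL$-quotient is shown to be stably rational of small level by two further applications of the no-name lemma plus Katsylo's results on $\GL_2$-representations. Finally, the non-degeneracy condition ($\clubsuit$) is verified by explicit vectors, organised by $b\bmod 5$. If you want to salvage your approach you would need (a) a linearization of the Hessian construction (e.g. passing to the polarization $(F,F')_{(2,0)}$ and fixing one argument), and (b) a reason $\mathcal H/\PGLPGL$ is tractable; as written, neither is supplied.
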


$\mathcal{T}_g$ is naturally regarded as a sublocus of 
the moduli space $\mathcal{M}_g$ of genus $g$ curves. 
The rationality of $\mathcal{T}_g$ can be seen as an extension of that of the hyperelliptic locus 
due to Katsylo and Bogomolov \cite{Ka1}, \cite{B-K}. 
It would be interesting whether the tetragonal and pentagonal loci are rational as well. 
They are unirational (see, e.g., \cite{A-C}, \cite{Sc}), 
but at present known to be rational only for tetragonal of genus $7$ (\cite{B-B-C}). 
A related question is whether one can find a rational locus in $\mathcal{M}_g$ of larger dimension. 
When $g\geq23$, Castorena and Ciliberto \cite{C-C} show that 
$\mathcal{T}_g$ has larger dimension than any other locus that is (generically) 
the natural image of a linear system on a surface. 
Thus, for the above question, one would next look at 
curves in a variety of dimension $\geq3$ whose ideals have simple description. 
Note that tetragonal and pentagonal curves can be constructed in such ways (\cite{Sc}). 

We approach our problem from invariant theory for ${\SLSL}$. 
Let $V_{a,b}=H^0({\sheaf}_{{\proj}^1\times{\proj}^1}(a, b))$ be the space of bi-forms of bidegree $(a, b)$ on ${\proj}^1\times{\proj}^1$, 
which is an irreducible representation of ${\SLSL}$. 
It is classically known that a general trigonal curve $C$ of genus $g=4N$ is canonically embedded in ${\proj}^1\times{\proj}^1$ 
as a smooth curve of bidegree $(3, 2N+1)$. 
This is based on the fact that the canonical model of $C$ lies on a unique rational normal scroll 
which is isomorphic to ${\proj}^1\times{\proj}^1$. 
As a consequence, we have a natural birational equivalence 
\begin{equation*}
\mathcal{T}_{4N} \sim {\proj}V_{3, 2N+1}/{\SLSL}. 
\end{equation*}
Hence the problem is restated as follows. 

\begin{theorem}\label{main}
The quotient ${\proj}V_{3,b}/{\SLSL}$ is rational for every odd $b\geq5$. 
\end{theorem}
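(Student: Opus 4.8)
The plan is to work throughout with the linear quotient $V_{3,b}/\GLGL$, which is birational to $\proj V_{3,b}/\SLSL$: the action of $\SLSL$ on $\proj V_{3,b}$ factors through $\PGLPGL$, and the central torus $\mathbb{G}_m\times\mathbb{G}_m\subset\GLGL$ acts on $V_{3,b}$ through the full group of scalars, so $V_{3,b}/\GLGL=\proj V_{3,b}/\PGLPGL=\proj V_{3,b}/\SLSL$. Write $V_{3,b}=V_3\otimes V_b$, the two factors of $\GLGL$ acting on $V_3$ and on $V_b$ respectively, where $V_3,V_b$ are the spaces of binary forms of degree $3$ and $b$ in the two variables. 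The classical facts I would build on are that $V_3$ is a \emph{symplectic} $\SL_2$-module — the cubic transvectant $(\,\cdot\,,\,\cdot\,)_3\colon V_3\otimes V_3\to V_0$ is a nondegenerate alternating form $\omega$, so $\SL_2\hookrightarrow\mathrm{Sp}(V_3,\omega)=\mathrm{Sp}_4$ — and that, since $b$ is odd, $V_b$ is symplectic as well, a fact that enters the final step. A general $F\in V_{3,b}=\hom(V_b^{\vee},V_3)$ is surjective (as $b+1>4$), so $K_F:=\ker F$ has codimension $4$ and $F$ induces an isomorphism $V_b^{\vee}/K_F\xrightarrow{\sim}V_3$.

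First I would use $\omega$ to eliminate the $\GL_2$ on the cubic factor. The pullback $F\mapsto F^{*}\omega\in\Lambda^2 V_b$ is invariant under $\SL_2$ on that factor and equivariant under the other; in the decomposition $\Lambda^2 V_b\cong V_{2b-2}\oplus V_{2b-6}\oplus\cdots\oplus V_0$ its components are precisely the bi-transvectants $(F,F)_{3,1},(F,F)_{3,3},\dots$, which is where bi-transvectants enter. The radical of $F^{*}\omega$ is $K_F$, hence this map dominates the locus $\Sigma\subset\Lambda^2 V_b$ of alternating forms of rank $4$, with fibre over $\eta\in\Sigma$ the $\mathrm{Sp}_4$-torsor of symplectic isomorphisms $(V_b^{\vee}/\mathrm{rad}\,\eta,\bar\eta)\xrightarrow{\sim}(V_3,\omega)$, on which $\SL_2$ acts through $\mathrm{Sp}_4$. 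Because $\mathrm{Sp}_4$ is a special group, this torsor — and its quotient by the residual $\GL_2$ over $\Sigma/\GL_2$ — is Zariski-locally trivial; using that the homogeneous space $\SL_2\backslash\mathrm{Sp}_4$ is rational and accounting for the central tori, one obtains that $V_{3,b}/\GLGL$ is birational to the product of $\Sigma/\GL_2$ with a rational variety.

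Next I would linearise $\Sigma$ over a Grassmannian. The map $\eta\mapsto\mathrm{rad}(\eta)$ exhibits $\Sigma$ as an open subset of the total space of the rank-$6$ vector bundle of alternating forms on the tautological quotient over the Grassmannian ${\mathbb G}(b-3,V_b^{\vee})$ of $(b-3)$-dimensional subspaces, equivariantly for the residual $\GL_2$, which acts through $\PGL_2$ on the Grassmannian and does so generically freely for $b\ge5$. The no-name lemma then gives
\begin{equation*}
\Sigma/\GL_2\ \sim\ \bigl({\mathbb G}(b-3,V_b^{\vee})/\GL_2\bigr)\times{\mathbb A}^6 .
\end{equation*}
Since ${\mathbb G}(b-3,b+1)\cong{\mathbb G}(4,b+1)$ and $V_b\cong V_b^{\vee}$ as $\SL_2$-modules, the theorem is reduced to the statement that ${\mathbb G}(4,V_b)/\GL_2$ is rational for every odd $b\ge5$.

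This reduction is where the real difficulty sits. The quotient ${\mathbb G}(4,V_b)/\GL_2$ is the moduli space of three-dimensional linear systems of binary $b$-forms modulo $\PGL_2$, equivalently of rational curves of degree $b$ in $\proj^3$ modulo $\PGL_4\times\PGL_2$; the obvious equivariant invariants, such as the Wronskian ${\mathbb G}(4,V_b)\dashrightarrow\proj V_{4b-12}$, are only generically finite, not birational, so they do not transmit rationality. My intention is to produce, by iterated bi-transvectants adapted to the generic stratum (and exploiting that a generic $4$-plane is a symplectic subspace of $V_b$), a $\GL_2$-equivariant dominant rational map from ${\mathbb G}(4,V_b)$ onto a single binary-form module $V_m$ (or a small direct sum of such) whose fibres are affine spaces; the no-name lemma would then present ${\mathbb G}(4,V_b)/\GL_2$ as an affine-space bundle over $V_m/\GL_2$, which is rational by the results of Bogomolov and Katsylo on $\SL_2$-quotients. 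Verifying that the fibres of that covariant are genuinely linear — rather than merely rational — is the crux of the argument. One may alternatively run the whole discussion on the trigonal side $\pi\colon C\to\proj^1$, via the Tschirnhausen module of the triple cover, but this leads to the same final reduction.
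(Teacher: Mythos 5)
Your reduction is attractive and genuinely different from the paper's, but it does not close: the final step, the rationality (or bounded stable rationality) of ${\mathbb G}(4,V_b)/\GL_2$ for all odd $b\ge 5$, is left unproven, and you explicitly acknowledge that it is ``the crux of the argument.'' Nothing in the preceding reduction makes that quotient any more tractable than the original problem. Its dimension $4b-16$ grows linearly in $b$, while the affine cofactor you split off has dimension bounded independently of $b$, so you would need genuine rationality (or stable rationality of a fixed level) of the full family ${\mathbb G}(4,V_b)/\GL_2$; the proposed route via an ``iterated bi-transvectant onto a single binary-form module $V_m$ with affine-space fibres'' is precisely the kind of statement that requires a non-degeneracy verification analogous to Lemma~\ref{non-degeneracy}, and no such covariant is exhibited, nor is the linearity of its fibres established. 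As it stands the proposal replaces one open rationality problem with another.

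By contrast, the paper never confronts a Grassmannian of $4$-planes. It selects bi-transvectants $T\colon V_{3,b}\times V_{a',b'}\to V_{a'',b''}$ (varying with $b \bmod 5$) so that the corank $c=\dim V_{a',b'}-\dim V_{a'',b''}$ is only $1$ or $3$, and so that $a',b',c$ are all odd; the latter parity is what makes the ${\det}\mathcal{F}$ twist kill the action of $(\pm1,\mp1)$ and lets the no-name lemma apply over ${\PGLPGL}$. The resulting base $G(c,V_{a',b'})$ is a Grassmannian of very small subspaces, whose ${\SLSL}$-quotient is shown to be stably rational of low level by a short chain of slice and no-name reductions terminating at $V_{1,1}^{\oplus3}/{\SLSL}$ and Katsylo's rationality of $V_{3,1}/\GL_2$ (Lemma~\ref{stable rational basic}, Corollary~\ref{linear system stable rational}, Propositions~\ref{Grassmann stable rational}--\ref{Grassmann stable rational II}). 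The bulk of the paper's work is then the case-by-case verification of condition~\eqref{eqn:non-degeneracy} via Lemma~\ref{non-degeneracy}, which is exactly the kind of check your sketch postpones. I would also caution that several intermediate steps in your symplectic reduction need more care than the text gives them: the first $\GL_2$ does not act through $\mathrm{Sp}_4$ but through $\mathrm{GSp}_4$ (the pairing $T^{(3)}$ twists by a power of $\det$), so the claimed Zariski-local triviality of the quotiented torsor over $\Sigma/\GL_2$ requires tracking the similitude character and the scalar tori explicitly; these are fixable but not free.
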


To prove this, we adopt the traditional and computational method of 
\textit{double bundle} (\cite{B-K}, \cite{SB2}) as follows. 
By examining the Clebsch-Gordan formula for ${\SLSL}$, 
we take a suitable ${\SLSL}$-bilinear mapping (bi-transvectant) 
\begin{equation*}\label{eqn:transvectant intro}
T : V_{3,b}\times V_{a',b'} \to V_{a'',b''} 
\end{equation*}
such that ${\dim}V_{a',b'}>{\dim}V_{a'',b''}$. 
Put $c={\dim}V_{a',b'}-{\dim}V_{a'',b''}$ and 
let $G(c, V_{a',b'})$ be the Grassmannian of $c$-dimensional subspaces of $V_{a',b'}$. 
Then $T$ induces the rational map 
\begin{equation}\label{eqn:double bundle intro}
V_{3,b} \dashrightarrow G(c, V_{a',b'}), \qquad v\mapsto{\ker}(T(v, \cdot)). 
\end{equation}
We shall find a bi-transvectant for which 
\eqref{eqn:double bundle intro} is well-defined as a rational map and is dominant. 
In that case, \eqref{eqn:double bundle intro} makes $V_{3,b}$ birationally 
an ${\SLSL}$-linearized vector bundle over $G(c, V_{a',b'})$. 
Utilizing this bundle structure and taking care of $-1$ scalar action, 
we reduce the rationality of ${\proj}V_{3,b}/{\SLSL}$ to a stable rationality of $G(c, V_{a',b'})/{\SLSL}$, 
which in turn can be shown in a more or less standard way. 

The point for this proof is to choose the bi-transvectant $T$ carefully so that 
(i) $a', b', c$ are odd (to care $-1$ scalar action) and that 
(ii) $c$ is small (for $V_{3,b}$ to have larger dimension than $G(c, V_{a',b'})$). 
For that, we will provide $T$ according to the remainder of $b$ modulo $5$, 
based on some easy calculation. 
Then the bulk of proof is devoted to verifying non-degeneracy of \eqref{eqn:double bundle intro}, 
which is facilitated by keeping $c$ small but is still somewhat laborious. 

The rest of the article is as follows. 
In \S \ref{ssec:transvectant} we recall bi-transvectants. 
We explain the method of double bundle in \S \ref{ssec:double bundle}. 
In \S \ref{sec:stable rationality} we prepare some stable rationality results in advance, 
to which the rationality of ${\proj}V_{3,b}/{\SLSL}$ will be eventually reduced. 
Then we prove Theorem \ref{main} in \S \ref{sec:proof}. 

We work over the complex numbers. 
The Grassmannian $G(a, V)$ parametrizes $a$-dimensional linear subspaces of the vector space $V$. 


\vspace{0.3cm}
\noindent
\textbf{Acknowledgement.}
I would like to thank the referees 
for valuable suggestions on the presentation of the manuscript.


\section{Bi-transvectant}\label{sec:transvectant}


\subsection{Bi-transvectant}\label{ssec:transvectant}

We write $V_d$ for the ${\SL}_2$-representation $H^0({\Oline}(d))$, 
the space of binary forms of degree $d$. 
Let $e\leq d$. 
According to the Clebsch-Gordan decomposition 
\begin{equation}\label{eqn:C-G SL2}
V_d\otimes V_e = \bigoplus_{r=0}^{e}V_{d+e-2r}, 
\end{equation}
there exists a unique (up to constant) ${\SL}_2$-bilinear mapping  
\begin{equation*}
T^{(r)}:V_d\times V_e\to V_{d+e-2r}, 
\end{equation*}
which is called the \textit{$r$-th transvectant}. 
For two binary forms $F(X, Y)\in V_d$ and $G(X, Y)\in V_e$, 
we have the well-known explicit formula (cf. \cite{P-V}) 
\begin{equation}\label{eqn:transvectant}
T^{(r)}(F, G) =  \frac{(d-r)!}{d!}\frac{(e-r)!}{e!}
                        \sum_{i=0}^{r} (-1)^{i} \binom{r}{i}
                         \frac{\partial^rF}{\partial X^{r-i}\partial Y^i}\frac{\partial^rG}{\partial X^{i}\partial Y^{r-i}}. 
\end{equation}
We will need this formula when $r=e$ and $r=e-1$.

The $e$-th transvectant   
$T^{(e)}\colon V_d\times V_e \to V_{d-e}$
is especially called the \textit{apolar covariant}. 
By \eqref{eqn:transvectant}, $T^{(e)}(F, G)$ is calculated by applying the differential polynomial 
$(d!)^{-1}(d-e)!G(-\partial_Y, \partial_X)$ to $F(X, Y)$. 
In particular, we have 
\begin{equation*}\label{eqn:apolar}
T^{(e)}(X^iY^{d-i}, X^{e-j}Y^j) = \left\{ \begin{array}{cl} 
                             (-1)^{e-j}\binom{d}{i}^{-1}\binom{d-e}{i-j}X^{i-j}Y^{(d-e)-(i-j)},   &      j\leq i, \; e-j\leq d-i,  \\
                             0,                   &            \text{otherwise.}      \\
                               \end{array} \right. 
\end{equation*}
For the $(e-1)$-th transvectant 
$T^{(e-1)}\colon V_d\times V_e \to V_{d-e+2}$, 
we have 
\begin{equation*}
T^{(e-1)}( \cdot , X^{e-j}Y^{j}) = (-1)^{e-j}\frac{1}{e}\frac{(d-e+1)!}{d!}
                                         \left\{ jY\partial_{X}^{j-1}\partial_{Y}^{e-j} - (e-j)X\partial_{X}^{j}\partial_{Y}^{e-j-1} \right\}, 
\end{equation*}
where $\partial_{X}^{-1}=\partial_{Y}^{-1}=0$ by convention. 
Therefore                                           
\begin{equation*}\label{eqn:pre-apolar}
T^{(e-1)}(X^iY^{d-i}, X^{e-j}Y^j) = 
                            \left\{ \begin{array}{cl} 
                             A X^{i-j+1}Y^{(d-i)-(e-j)+1},   &  \;    j\leq i+1, \; e-j\leq d-i+1,  \\ 
                             0,                   &    \;        \text{otherwise},      \\
                               \end{array} \right. 
\end{equation*}
where 
\begin{equation*}
A = (-1)^{e-j}\binom{d}{i}^{-1}\binom{d-e+2}{i-j+1} \frac{j(d+2)-(i+1)e}{e(d-e+2)}. 
\end{equation*}
We stress in particular that 

\begin{lemma}\label{pre-apolar non-dege}
Let $0\leq j\leq i+1$ and $0\leq e-j\leq d-i+1$. 
The bilinear map 
\begin{equation*}
T^{(e-1)} : {\C}X^iY^{d-i}\times{\C}X^{e-j}Y^j \to {\C}X^{i-j+1}Y^{(d-e+2)-(i-j+1)} 
\end{equation*}
is non-degenerate if and only if $j(d+2)\ne(i+1)e$. 
This is always the case when $d+2$ is coprime to $e$. 
\end{lemma}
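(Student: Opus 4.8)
The plan is essentially a one-line reduction followed by bookkeeping, since both the source $\C X^iY^{d-i}\otimes\C X^{e-j}Y^j$ and the target $\C X^{i-j+1}Y^{(d-e+2)-(i-j+1)}$ are one-dimensional: a bilinear map between lines is non-degenerate exactly when its single structure constant is nonzero. So first I would simply invoke the explicit formula for $T^{(e-1)}(X^iY^{d-i}, X^{e-j}Y^j)$ stated just before the lemma. The inequalities $0\le j\le i+1$ and $0\le e-j\le d-i+1$ place us in the first branch of that piecewise formula, so $T^{(e-1)}(X^iY^{d-i},X^{e-j}Y^j)=A\,X^{i-j+1}Y^{(d-e+2)-(i-j+1)}$ with $A$ as displayed, and the whole question becomes: when is $A\ne 0$?

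Next I would check that, under the stated hypotheses, every factor of $A$ except the rightmost one is automatically nonzero. The sign $(-1)^{e-j}$ and the reciprocal binomial $\binom{d}{i}^{-1}$ are nonzero (the latter because $0\le i\le d$, which is part of the meaning of $X^iY^{d-i}\in V_d$). The binomial $\binom{d-e+2}{i-j+1}$ is nonzero since $i-j+1\ge 0$ follows from $j\le i+1$, and $i-j+1\le d-e+2$ is just $e-j\le d-i+1$ rearranged. Finally $e(d-e+2)\ne 0$, because the $(e-1)$-th transvectant is only defined for $1\le e\le d$, whence $d-e+2\ge 2$. Therefore $A\ne 0$ if and only if $j(d+2)-(i+1)e\ne 0$, which is the first assertion.

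For the coprimality statement I would argue by contradiction: assume $\gcd(d+2,e)=1$ and $j(d+2)=(i+1)e$. Then $e\mid j(d+2)$, so $e\mid j$; since $0\le j\le e$ (from $e-j\ge 0$) this forces $j=0$ or $j=e$. The case $j=0$ gives $(i+1)e=0$, impossible as $i\ge 0$ and $e\ge 1$; the case $j=e$ gives $e(d+2)=(i+1)e$, i.e. $i=d+1$, contradicting $i\le d$. Hence the equality $j(d+2)=(i+1)e$ can never occur, so $A\ne 0$ and the map is always non-degenerate.

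There is no real obstacle here; the only point to be careful about is to record the implicit standing hypotheses $0\le i\le d$ and $1\le e\le d$ explicitly, since these are precisely what guarantee the auxiliary factors of $A$ do not vanish and what makes the divisibility argument in the last paragraph valid.
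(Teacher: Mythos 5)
Your proposal is correct and is essentially the argument the paper leaves implicit: the lemma is an immediate consequence of the displayed formula for the structure constant $A$ preceding it, and you correctly verify that every factor of $A$ other than $j(d+2)-(i+1)e$ is nonzero under the stated index constraints (together with the standing hypotheses $0\le i\le d$, $1\le e\le d$), then settle the coprimality case by the divisibility argument $e\mid j$ forcing $j\in\{0,e\}$, both of which lead to contradictions.
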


Now we consider ${\SLSL}$-representations. 
The space $V_{a,b}=H^0({\sheaf}_{{\proj}^1\times{\proj}^1}(a, b))$ is 
the tensor representation $V_a\boxtimes V_b$. 
Substituting \eqref{eqn:C-G SL2} into 
\begin{equation*}
V_{a,b}\otimes V_{a',b'} = (V_a\otimes V_{a'})\boxtimes(V_b\otimes V_{b'}), 
\end{equation*}
we obtain the Clebsch-Gordan decomposition for ${\SLSL}$, 
\begin{equation*}\label{eqn: C-G SL2SL2}
V_{a,b}\otimes V_{a',b'} = \bigoplus_{r, s}V_{a+a'-2r, b+b'-2s},  
\end{equation*}
where $0\leq r\leq{\min}\{a, a'\}$ and $0\leq s\leq{\min}\{b, b'\}$. 
To each irreducible summand $V_{a+a'-2r, b+b'-2s}$ is associated the \textit{$(r, s)$-th bi-transvectant} 
\begin{equation*}\label{eqn: transvectant}
T^{(r,s)} : V_{a,b}\times V_{a',b'} \to V_{a+a'-2r, b+b'-2s}. 
\end{equation*}
This ${\SLSL}$-bilinear mapping is calculated from the transvectants by 
\begin{equation*}\label{eqn: transvectant II}
T^{(r,s)}(F\boxtimes G, \; F'\boxtimes G') = T^{(r)}(F, F')\boxtimes T^{(s)}(G, G'), 
\end{equation*}
where 
$F\in V_a$, $G\in V_b$, $F'\in V_{a'}$, and $G'\in V_{b'}$.


\subsection{The method of double bundle}\label{ssec:double bundle}

In \S \ref{sec:proof}, we will use the method of double bundle (\cite{B-K}) and its generalization (\cite{SB2}). 
We here give some account in the present situation. 
The strategy is to find a certain bi-transvectant 
which introduces on the target ${\proj}V_{a,b}$ a fibration structure over a Grassmannian, 
and then reduce the rationality of ${\proj}V_{a,b}/{\SLSL}$ to a stable rationality of the quotient of the latter.

Suppose we have a bi-transvectant 
\begin{equation*}
T=T^{(r,s)} : V_{a,b}\times V_{a',b'} \to V_{a'',b''} 
\end{equation*}
such that 
$c:={\dim}V_{a',b'}-{\dim}V_{a'',b''}$ is positive and that 
${\dim}V_{a,b}>c\cdot{\dim}V_{a'',b''}$. 
This bilinear map induces an ${\SLSL}$-linear embedding 
\begin{equation*}
 V_{a,b} \subset {\hom}(V_{a',b'}, V_{a'',b''}). 
\end{equation*}
The space ${\hom}(V_{a',b'}, V_{a'',b''})$ is birationally fibered over $G(c, V_{a',b'})$, 
by sending a surjective linear map to its kernel. 
We can thus consider an ${\SLSL}$-equivariant rational map 
\begin{equation*}\label{eqn:double bundle general}
\varphi : V_{a,b} \dashrightarrow G(c, V_{a',b'}), \qquad v\mapsto{\ker}(T(v, \cdot)). 
\end{equation*}
We assume (hope) that 
\begin{equation}\tag{$\clubsuit$}\label{eqn:non-degeneracy}
\textit{$\varphi$ is defined on a non-empty locus, and is dominant.}
\end{equation}
This means that the position of $V_{a,b}$ inside ${\hom}(V_{a',b'}, V_{a'',b''})$ is 
"non-degenerate" with regards to the fibration over $G(c, V_{a',b'})$. 
The inequality ${\dim}V_{a,b}>c\cdot{\dim}V_{a'',b''}$ above 
is the dimension condition necessary for the dominance of $\varphi$ to be possible. 
If \eqref{eqn:non-degeneracy} holds, 
then $V_{a,b}$ becomes birational to the unique component $\mathcal{E}$ of the incidence 
\begin{equation*}\label{incidence}
\mathcal{X} = \{ (v, P) \in V_{a,b}\times G(c, V_{a',b'}), \: \: T(v, P)\equiv0 \}
\end{equation*}
that dominates $G(c, V_{a',b'})$. 
Indeed, the first projection $\pi\colon\mathcal{X}\to V_{a,b}$ is isomorphic over the domain $U$ of regularity of $\varphi$, 
and then the dominance of $\varphi$ implies that $\pi^{-1}(U)$ is contained in $\mathcal{E}$. 
Since $\mathcal{E}$ is (generically) a sub vector bundle of $V_{a,b}\times G(c, V_{a',b'})$ 
preserved under the ${\SLSL}$-action, 
it is an ${\SLSL}$-linearized vector bundle over $G(c, V_{a',b'})$. 
We shall then try to apply the following no-name lemma (cf.~\cite{Do}). 

\begin{lemma}[no-name lemma]
Let $G$ be an algebraic group and $\mathcal{E}\to X$ a $G$-linearized vector bundle of rank $N+1$. 
Suppose that $G$ acts on $X$ almost freely. Then 
\begin{equation*}
{\proj}\mathcal{E} / G \sim {\proj}^N \times (X/G). 
\end{equation*}
\end{lemma}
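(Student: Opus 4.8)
The plan is to reduce the assertion to the generic fibre of the rational quotient $X\to X/G$, where the $G$-action becomes a torsor and equivariant descent trivialises the bundle.

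First I would exploit the almost-freeness hypothesis. By Rosenlicht's theorem on rational quotients, together with the standard generic-freeness argument (see \cite{Do}), there is a $G$-invariant dense open subset $X^{\circ}\subseteq X$ admitting a geometric quotient $q\colon X^{\circ}\to Y^{\circ}:=X^{\circ}/G$ which is a $G$-torsor, locally trivial in the \'etale (hence fppf) topology. Since $Y^{\circ}$ is a model of $X/G$, and $\proj\mathcal{E}/G$ is birational to $\proj(\mathcal{E}|_{X^{\circ}})/G$, it suffices to identify the latter quotient birationally with $\proj^{N}\times Y^{\circ}$.

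Next I would descend the bundle. As $\mathcal{E}$ is $G$-linearized, $\mathcal{E}|_{X^{\circ}}$ is a $G$-equivariant vector bundle on the torsor $q\colon X^{\circ}\to Y^{\circ}$, so by fppf descent it is the pullback $q^{*}\mathcal{F}$ of a rank $N+1$ vector bundle $\mathcal{F}$ on $Y^{\circ}$, with the $G$-action on $q^{*}\mathcal{F}$ induced solely from that on $X^{\circ}$. Hence $\proj(\mathcal{E}|_{X^{\circ}})\cong X^{\circ}\times_{Y^{\circ}}\proj\mathcal{F}$ as $G$-varieties, $G$ acting through the first factor only, and passing to the free quotient yields $\proj(\mathcal{E}|_{X^{\circ}})/G\cong\proj\mathcal{F}$. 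Finally, $\proj\mathcal{F}\to Y^{\circ}$, being the projectivisation of a vector bundle, is a Zariski-locally trivial $\proj^{N}$-bundle and therefore birational to $\proj^{N}\times Y^{\circ}$. Chaining the identifications gives $\proj\mathcal{E}/G\sim\proj(\mathcal{E}|_{X^{\circ}})/G\cong\proj\mathcal{F}\sim\proj^{N}\times Y^{\circ}\sim\proj^{N}\times(X/G)$, as claimed.

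I expect the main obstacle to be the first step: upgrading the set-theoretic almost-freeness to an honest principal-bundle structure on a dense open subset, so that the descent argument has a torsor to run on. Once this is granted, the descent in the second step is formal and the final step is elementary; in our application the acting group is $\SLSL$, which is connected and reductive, so this point causes no difficulty.
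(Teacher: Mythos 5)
The paper does not give a proof of this lemma: it is stated as a known result with the reference ``cf.~\cite{Do}''. There is therefore no ``paper's own proof'' to compare against, and the task is only to assess your argument, which is correct and is in fact the standard descent proof one finds in such references. The three stages you lay out are exactly the right ones: (1) Rosenlicht plus generic freeness and generic flatness give a $G$-invariant dense open $X^{\circ}$ over which $q\colon X^{\circ}\to Y^{\circ}$ is a geometric quotient that is a $G$-torsor; (2) faithfully flat descent of quasi-coherent sheaves along $q$ sends the $G$-equivariant bundle $\mathcal{E}|_{X^{\circ}}$ to a rank-$(N+1)$ vector bundle $\mathcal{F}$ on $Y^{\circ}$ with $\mathcal{E}|_{X^{\circ}}\cong q^{*}\mathcal{F}$ equivariantly; (3) then $\proj(\mathcal{E}|_{X^{\circ}})/G\cong\proj\mathcal{F}$, which is Zariski-locally trivial as a $\proj^{N}$-bundle because $\mathcal{F}$ is a vector bundle, hence birational to $\proj^{N}\times Y^{\circ}$. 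You correctly flag the torsor claim as the only point needing care; note that once $q$ is a geometric quotient of a free action by a linear algebraic group and is (after shrinking) flat, the map $G\times X^{\circ}\to X^{\circ}\times_{Y^{\circ}}X^{\circ}$ is an isomorphism, and since $G$ is smooth over $\C$ the fppf torsor is automatically \'etale. One could also phrase step (3) slightly more simply by proving the no-name lemma first for the affine total space $\mathcal{E}$ (giving $\mathcal{E}/G\sim\mathbb{A}^{N+1}\times(X/G)$) and then projectivising, but your direct handling of the projective bundle is equally valid.
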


In the present situation, however, ${\SLSL}$ never acts on $G(c, V_{a',b'})$ almost freely 
because of the presence of $(\pm1, \mp1)\in{\SLSL}$. 
So we should take $G={\PGLPGL}$, whose action on $G(c, V_{a',b'})$ is now almost free in most cases, 
but then the ${\SLSL}$-linearization on $\mathcal{E}$ may not descends to that of $G$. 
To deal with this problem, 
we want to tensor $\mathcal{E}$ with an ${\SLSL}$-linearized line bundle $\mathcal{L}$ 
that kills the action of $(\pm1, \mp1)$ on $\mathcal{E}$. 
If this was successful, we would have 
\begin{equation}\label{eqn:apply twisted no-name}
{\proj}\mathcal{E}/G = {\proj}(\mathcal{E}\otimes\mathcal{L})/G \sim {\proj}^N \times (G(c, V_{a',b'})/G) 
\end{equation}
where $N={\dim}\,{\proj}V_{a,b} - {\dim}\, G(c, V_{a',b'})$. 
Thus the rationality of ${\proj}V_{a,b}/G$ could be reduced to a stable rationality of $G(c, V_{a',b'})/G$, 
which is much easier to prove: 
we prepare results of this sort in the next \S \ref{sec:stable rationality}.

In practice, we will check the non-degeneracy requirement \eqref{eqn:non-degeneracy} as follows. 

\begin{lemma}[cf.\;\cite{B-K}]\label{non-degeneracy}
The condition \eqref{eqn:non-degeneracy} is satisfied if and only if 
there exist vectors $v \in V_{a,b}$ and $w_1,\cdots,w_c \in V_{a',b'}$ such that 

(i) $w_1,\cdots,w_c$ are linearly independent, 

(ii) $T(v, w_i)=0$ for every $w_i$, 

(iii) the map $T(v, \cdot) : V_{a',b'} \to V_{a'',b''}$ is surjective, and 

(iv) the map $(T(\cdot, w_1),\cdots,T(\cdot, w_c)) : V_{a,b} \to V_{a'',b''}^{\oplus c}$ is surjective. 
\end{lemma}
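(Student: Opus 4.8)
The statement to prove is Lemma~\ref{non-degeneracy}: that the non-degeneracy condition \eqref{eqn:non-degeneracy} for the rational map $\varphi\colon V_{a,b}\dashrightarrow G(c,V_{a',b'})$ is equivalent to the existence of a single vector $v\in V_{a,b}$ and vectors $w_1,\dots,w_c\in V_{a',b'}$ satisfying (i)--(iv). The plan is to unwind what "$\varphi$ is defined at $v$ and is dominant" means at the level of the incidence variety $\mathcal{X}=\{(v,P):T(v,P)\equiv0\}$ and its distinguished component $\mathcal{E}$, and to translate the two halves of \eqref{eqn:non-degeneracy} ($\varphi$ regular somewhere; $\varphi$ dominant) into infinitesimal (rank/surjectivity) statements using generic smoothness.

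First I would treat the "$\varphi$ is defined on a non-empty locus" part. The map $\varphi$ sends $v$ to $\ker(T(v,\cdot))$, and this is well-defined as a point of $G(c,V_{a',b'})$ precisely when $T(v,\cdot)\colon V_{a',b'}\to V_{a'',b''}$ has rank equal to $\dim V_{a'',b''}$, i.e.\ is surjective (then the kernel has the correct dimension $c$). Surjectivity of $T(v,\cdot)$ is an open condition on $v$, so $\varphi$ is defined on a non-empty locus if and only if there exists $v$ with $T(v,\cdot)$ surjective — that is condition (iii). Given such a $v$, pick any basis $w_1,\dots,w_c$ of $\ker(T(v,\cdot))$; then (i) and (ii) hold automatically. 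So (i)--(iii) together are exactly equivalent to the regularity half of \eqref{eqn:non-degeneracy}, and conversely any $v$ satisfying (i)--(iv) has $\varphi$ regular at it by (iii).

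Next, the dominance of $\varphi$. Over the open locus $U\subset V_{a,b}$ where $\varphi$ is regular, the incidence $\mathcal{X}$ is isomorphic to $U$ via the first projection $\pi$, and $\pi^{-1}(U)$ lies in the component $\mathcal{E}$ that dominates $G(c,V_{a',b'})$; thus $\varphi$ dominant is equivalent to the second projection $\mathcal{E}\to G(c,V_{a',b'})$ being dominant with $\mathcal{E}$ of the expected dimension, equivalently to $\pi\colon\mathcal{E}\to V_{a,b}$ being dominant. The differential of $\pi$ at a point $(v,P)$ with $P=\langle w_1,\dots,w_c\rangle$ is surjective exactly when the "vertical" equations cutting out $\mathcal{X}$ — the conditions $T(\cdot,w_i)\in(\text{image})$, more precisely the composite maps $V_{a,b}\to V_{a'',b''}$, $u\mapsto T(u,w_i)$ — impose independent conditions, which is condition (iv): the map $(T(\cdot,w_1),\dots,T(\cdot,w_c))\colon V_{a,b}\to V_{a'',b''}^{\oplus c}$ is surjective. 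By generic smoothness over $\C$, the existence of one point $(v,P)\in\mathcal{X}$ at which this differential is surjective is equivalent to $\pi|_{\mathcal{E}}$ being dominant. Combining: \eqref{eqn:non-degeneracy} holds if and only if there is a point of $\mathcal{X}$ at which both $T(v,\cdot)$ is surjective (so $(v,P)$ is even in the good locus, $P=\ker T(v,\cdot)$) and the differential $d\pi$ is surjective — which is precisely the conjunction (i)--(iv).

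The one point requiring care, and which I expect to be the main technical obstacle, is the bookkeeping that identifies the tangent space to $\mathcal{X}$ (or to $\mathcal{E}$) correctly and shows that $d\pi_{(v,P)}$ surjective is faithfully encoded by (iv). One must be careful that $\mathcal{X}$ is cut out inside $V_{a,b}\times G(c,V_{a',b'})$ by the bilinear equations $T(v,w)=0$ for $w$ in the tautological subbundle, use the Grassmannian's tangent space $T_P G=\hom(P,V_{a',b'}/P)$, and check that after restricting to the fiber direction the relevant Jacobian block is exactly $(T(\cdot,w_1),\dots,T(\cdot,w_c))$, so that surjectivity of the full differential of $\pi$ reduces to (iv) given (iii). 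Once this local computation is set up cleanly, the equivalence follows from generic smoothness and the openness of the surjectivity loci as above; this is the standard double-bundle argument of Bogomolov--Katsylo \cite{B-K}, and I would cite it for the routine parts.
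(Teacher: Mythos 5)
Your overall structure (translate the two halves of $(\clubsuit)$ into an existence statement plus an infinitesimal criterion) is reasonable, but the key infinitesimal claim is wrong, and this is a genuine gap rather than a cosmetic slip.

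You assert that the differential of the first projection $\pi\colon\mathcal{X}\to V_{a,b}$ at a point $(v,P)$ is surjective exactly when condition (iv) holds. This is false. At any point of $\pi^{-1}(U)$ (i.e.\ where (ii)+(iii) hold and $P=\ker T(v,\cdot)$), the first projection restricts to an isomorphism $\pi^{-1}(U)\xrightarrow{\sim}U$, so $d\pi_{(v,P)}$ is \emph{automatically} an isomorphism, with no input from (iv). Concretely, because $P=\ker T(v,\cdot)$ and $T(v,\cdot)$ is surjective, the Grassmannian block of the Jacobian of the defining equations (the map $\hom(P,V_{a',b'}/P)\to\hom(P,V_{a'',b''})$ induced by the isomorphism $T(v,\cdot)\colon V_{a',b'}/P\xrightarrow{\sim}V_{a'',b''}$) is already an isomorphism, so for any $\delta v$ one solves uniquely for $\delta P$. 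Hence your generic-smoothness argument, applied to $\pi$, can never detect (iv), and it does not establish dominance of $\varphi$: you have checked smoothness of a map that is trivially smooth. The condition (iv) is really about the \emph{other} direction: it is equivalent to surjectivity of $d\varphi_v\colon V_{a,b}\to T_P G=\hom(P,V_{a',b'}/P)$ (equivalently, of the differential of the second projection along the relevant component). If you carry out the tangent-space computation for $\varphi$ rather than for $\pi$, using the identity $T(\delta v, w)+T(v,\delta w)=0$ and the isomorphism $T(v,\cdot)\colon V_{a',b'}/P\xrightarrow{\sim}V_{a'',b''}$, you find $d\varphi_v$ surjective $\iff$ the map $(T(\cdot,w_1),\dots,T(\cdot,w_c))\colon V_{a,b}\to V_{a'',b''}^{\oplus c}$ is surjective, which is (iv); generic smoothness would then finish the argument.

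For comparison, the paper avoids tangent-space computations and generic smoothness altogether: it observes that $\varphi^{-1}(P)$ is an open subset of the linear subspace $\ker\bigl(T(\cdot,w_1),\dots,T(\cdot,w_c)\bigr)\subset V_{a,b}$, so (iv) forces this fiber to have the expected dimension $\dim V_{a,b}-\dim G(c,V_{a',b'})$, and then a dimension count gives $\dim\varphi(U)\geq\dim G(c,V_{a',b'})$, hence dominance. This is more elementary and sidesteps the need to identify the component $\mathcal{E}$ or to decide which projection's differential controls dominance --- which is exactly the point where your write-up goes astray.
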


\begin{proof}
Let $P\in G(c, V_{a',b'})$ be the span of $w_1,\cdots, w_c$. 
The conditions (ii) and (iii) mean that $v$ is contained in the domain $U$ of regularity of $\varphi$ 
with $\varphi(v)=P$, whence $U\ne\emptyset$. 
Then (iv) implies that the fiber of the morphism $\varphi\colon U\to G(c, V_{a',b'})$ over $P$ 
has the expected dimension ${\dim}V_{a,b}-{\dim}\, G(c, V_{a',b'})$. 
Hence $\varphi(U)$ has dimension $\geq{\dim}\, G(c, V_{a',b'})$, and so $\varphi$ is dominant. 
\end{proof}


\section{Some stable rationality}\label{sec:stable rationality}

A variety $X$ is said to be \textit{stably rational of level $N$} if $X\times{\proj}^N$ is rational. 
In this section we prepare stable rationality results for some quotients of Grassmannians, 
to which the proof of Theorem \ref{main} will be finally reduced. 
We set $\overline{G}={\SLSL}/(-1, -1)$. 
When $a, b>0$ are odd, the element $(-1, -1)$ of ${\SLSL}$ acts on $V_{a,b}$ trivially 
so that $\overline{G}$ acts on $V_{a,b}$. 
This linear $\overline{G}$-action is almost free if ${\PGLPGL}$ acts on ${\proj}V_{a,b}$ almost freely, 
that is, general bidegree $(a, b)$ curves on ${\proj}^1\times{\proj}^1$ have no non-trivial stabilizer. 

\begin{lemma}\label{stable rational basic}
The group $\overline{G}$ acts on $V_{1,1}^{\oplus3}$ almost freely 
with the quotient $V_{1,1}^{\oplus3}/{\SLSL}$ rational. 
\end{lemma}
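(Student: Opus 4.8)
I would prove Lemma~\ref{stable rational basic} directly, by exhibiting a rational slice (section) for the $\overline{G}$-action on a dense open subset of $V_{1,1}^{\oplus 3}$. Recall that $V_{1,1}=\C^2\otimes\C^2$ is the space of $2\times 2$ matrices, with $\SLSL$ acting by $(g,h)\cdot M = gM h^{t}$ (or $gMh^{-1}$, after identifying one factor with its dual; both descriptions work). So a point of $V_{1,1}^{\oplus 3}$ is a triple $(M_1,M_2,M_3)$ of $2\times 2$ matrices, and the group acts simultaneously by left multiplication by $\mathrm{SL}_2$ and right multiplication by $\mathrm{SL}_2$. The key observation is that a \emph{generic} matrix $M_1$ is invertible, so after acting we may as well look at the pair $(M_1^{-1}M_2,\, M_1^{-1}M_3)$, which is acted on only by conjugation by the right-hand factor (up to the $\SL_2$ versus $\mathrm{GL}_2$ discrepancy, which I address below). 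Thus the essential content is the simultaneous-conjugation action of (essentially) $\mathrm{PGL}_2$ on a pair of $2\times 2$ matrices.

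**Key steps.** First I would reduce, using that the generic $M_1$ is invertible, to the action of the stabilizer of the first slot on $(M_2,M_3)$; the stabilizer is the diagonal $\SL_2$ acting by simultaneous conjugation (more precisely $\{(g,g): g\in\SL_2\}$ after transporting structure, so really $\mathrm{PGL}_2$ since $\pm1$ acts trivially on the pair). Second, I would normalize the pair of matrices under simultaneous conjugation: generically $M_2$ has distinct eigenvalues, so it can be conjugated to a fixed diagonal form, using up all of $\mathrm{PGL}_2$ except the residual torus $T\cong\mathbb{G}_m$; then the off-diagonal entries of $M_3$ can be scaled by $T$ to normalize one of them (say the $(1,2)$-entry) to $1$, using up the rest of $T$. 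Counting: $\dim V_{1,1}^{\oplus 3}=12$, $\dim\overline{G}=6$, so the quotient should be $6$-dimensional; the surviving free parameters (entries of $M_2$ after diagonalization: $2$; entries of $M_3$ after the torus normalization: $4$; that's $6$) match, confirming the action is almost free and giving a rational slice. Hence $V_{1,1}^{\oplus 3}/\SLSL$ is rational. One must take care that the $\SL$ versus $\mathrm{GL}$/$\mathrm{PGL}$ distinctions only contribute finite or toral factors which can be absorbed without leaving the rational category (e.g.\ $\mathrm{SL}_2$-orbits on invertible matrices by left multiplication have a rational section given by the unipotent-times-diagonal factors, and the determinant is an invariant that one keeps as a coordinate).

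**Alternative, and the main obstacle.** A cleaner route avoiding slice bookkeeping is to use the $\mathrm{GL}_2\times\mathrm{GL}_2$-equivariant identification $V_{1,1}=\hom(\C^2,\C^2)$ and to invoke the no-name lemma in the form already used in the paper: generically $M_1$ is an isomorphism, so $(M_1,M_2,M_3)\mapsto M_1$ exhibits $V_{1,1}^{\oplus 3}$ birationally as a (trivial) rank-$8$ bundle over the open orbit $\mathrm{GL}_2\times\mathrm{GL}_2/\mathrm{diag}\cong\mathrm{PGL}_2$ inside $\proj V_{1,1}$-ish data; then one reduces to the rationality of a quotient of $V_{1,1}^{\oplus 2}$ by the diagonal torus/$\mathrm{PGL}_2$. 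The genuine obstacle in \emph{either} approach is the same: verifying that $\overline{G}$ really acts \emph{almost freely} — i.e.\ that the generic triple $(M_1,M_2,M_3)$ has trivial stabilizer in $\overline{G}$ — rather than merely computing the quotient dimension. This amounts to checking that no nontrivial $(g,h)\in\SLSL$, other than $(-1,-1)$, simultaneously satisfies $gM_ih^{t}=M_i$ for $i=1,2,3$; after the reduction this becomes the statement that a generic pair of $2\times 2$ matrices has trivial simultaneous centralizer in $\mathrm{PGL}_2$, which follows because a matrix with distinct eigenvalues has centralizer exactly the maximal torus, and a second generic matrix is not normalized by any nontrivial element of that torus. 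I would present this centralizer computation carefully, as it is the one place where genuine verification (not just normal-form juggling) is needed.
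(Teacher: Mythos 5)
Your reduction step matches the paper's: both use that a generic $M_1\in V_{1,1}\cong\hom(\C^2,\C^2)$ is invertible to slice down to a pair of matrices acted on by simultaneous conjugation. The paper packages this as the slice method for the first projection $V_{1,1}^{\oplus 3}\to V_{1,1}$ (after first splitting off a $\C^\times$ to pass from $\SLSL$ to $\GLGL$), arriving at $V_{1,1}^{\oplus 2}/\GL_2$ with $\GL_2$ acting linearly by conjugation, and then \emph{cites Katsylo's theorem} on reducible $\SL_2$-representations to conclude rationality. You instead try to finish by exhibiting an explicit normal form --- diagonalize $M_2$, then use the residual torus to normalize one off-diagonal entry of $M_3$. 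That is a legitimate and more elementary alternative, but as written it has a gap: diagonalizing a matrix with distinct eigenvalues is only unique up to the normalizer of the torus $T\rtimes\Z/2\subset\PGL_2$, not just $T$, so after using $T$ to set $(M_3)_{12}=1$ you are left with a residual $\Z/2$ (Weyl group) action on your putative slice, swapping $(\lambda_1,(M_3)_{11})\leftrightarrow(\lambda_2,(M_3)_{22})$. Your ``slice'' is therefore generically $2$-to-$1$ onto the quotient, and one must still argue the $\Z/2$-quotient is rational (it is, e.g.\ via symmetric functions, or by replacing the diagonal normal form with a companion form to avoid the Weyl group entirely) --- this is exactly the kind of issue that citing Katsylo lets the paper sidestep. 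Your parameter bookkeeping is also off in a couple of places: after the torus normalization $M_1^{-1}M_3$ has $3$ free entries, not $4$, and the missing sixth parameter is $\det M_1$, which is an $\SLSL$-invariant you drop when you pass to $(M_1^{-1}M_2,M_1^{-1}M_3)$; also $\GLGL/\mathrm{diag}(\GL_2)\cong\GL_2$, not $\PGL_2$. Finally, I would disagree that verifying almost-freeness is ``the main obstacle'' --- the paper dispatches it in one line (via the $\PGLPGL$-action on $(\proj V_{1,1})^3$), and your centralizer computation is indeed easy; the real content of the lemma is the rationality of the pair-of-matrices quotient, which is where the $\Z/2$ bookkeeping (or Katsylo) is needed.
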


\begin{proof}
The first assertion follows from the almost freeness of the ${\PGLPGL}$-action on $({\proj}V_{1,1})^3$. 
For the second assertion, we first note that 
\begin{equation*}
V_{1,1}^{\oplus3}/{\SLSL} \sim (V_{1,1}^{\oplus3}/{\GLGL})\times{\C}^{\times}. 
\end{equation*}
The group ${\GLGL}$ acts on $V_{1,1}$ almost transitively 
with the stabilizer of a general point isomorphic to ${\GL}_2$ 
(identify $V_{1,1}$ with ${\hom}(V_1, V_1)$). 
Hence, applying the slice method (cf.~\cite{Do}) to the first projection $V_{1,1}^{\oplus3}\to V_{1,1}$, 
we obtain 
\begin{equation*}
V_{1,1}^{\oplus3}/{\GLGL} \sim V_{1,1}^{\oplus2}/{\GL}_2, 
\end{equation*}
where ${\GL}_2$ acts on $V_{1,1}^{\oplus2}$ linearly in the right hand side. 
Then the quotient $V_{1,1}^{\oplus2}/{\GL}_2$ is rational by the result of Katsylo \cite{Ka2}. 
\end{proof}

\begin{corollary}\label{linear system stable rational}
Let $n>0$ be an odd number. 
Then ${\proj}V_{1,n}/{\SLSL}$ and ${\proj}V_{3,n}/{\SLSL}$ are stably rational of level $13$. 
\end{corollary}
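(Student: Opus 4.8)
The plan is to realize each of $\proj V_{1,n}$ and $\proj V_{3,n}$, via the double-bundle method of \S\ref{ssec:double bundle}, as a $\overline{G}$-linearized vector bundle over a Grassmannian, and then to reduce the stable rationality of that Grassmannian's quotient to that of $V_{1,1}^{\oplus3}/\SLSL$, which is provided by Lemma~\ref{stable rational basic}.

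First, for $a=1$ and $a=3$ I would choose a bi-transvectant $T\colon V_{a,n}\times V_{a',b'}\to V_{a'',b''}$ with $c:=\dim V_{a',b'}-\dim V_{a'',b''}$ small and $\dim V_{a,n}>c\cdot\dim V_{a'',b''}$, and verify the non-degeneracy condition \eqref{eqn:non-degeneracy} using Lemma~\ref{non-degeneracy}: one exhibits explicit monomial vectors $v\in V_{a,n}$ and $w_1,\dots,w_c\in V_{a',b'}$ and reads off the required surjectivity statements from the apolar and pre-apolar formulas, invoking the non-vanishing in Lemma~\ref{pre-apolar non-dege}. This makes $V_{a,n}$ birationally an $\SLSL$-linearized vector bundle $\mathcal{E}$ over $G(c,V_{a',b'})$. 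Choosing $T$ so that $a',b',c$ are odd, the element $(-1,-1)$ acts trivially on $G(c,V_{a',b'})$ and as $\pm1$ on $\mathcal{E}$, so after twisting $\mathcal{E}$ by a suitable $\SLSL$-linearized line bundle the $(\pm1,\mp1)$-obstruction vanishes; since $\overline{G}$ then acts almost freely on $G(c,V_{a',b'})$ (a general $c$-dimensional subspace of $V_{a',b'}$ having trivial stabilizer), the no-name lemma gives
\[
\proj V_{a,n}/\SLSL\;\sim\;\proj^{N_a}\times\bigl(G(c,V_{a',b'})/\overline{G}\bigr),\qquad N_a=\dim\proj V_{a,n}-\dim G(c,V_{a',b'}).
\]

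It remains to see that $G(c,V_{a',b'})/\overline{G}$ is stably rational of the appropriate level. For this I would apply the double-bundle construction once more — this time to the tautological subbundle over the Grassmannian, via transvectants of apolar type together with a slice reduction for its frame — peeling off a projective-space factor at each step, until the base becomes birational to $V_{1,1}^{\oplus3}$; there $\overline{G}$ acts almost freely and $V_{1,1}^{\oplus3}/\SLSL$ is rational by Lemma~\ref{stable rational basic}. Collecting the projective-space factors produced by these successive no-name steps, one checks that $\proj^{13}$ absorbs them all, which accounts for the stated level.

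The step I expect to be the main obstacle is the verification of \eqref{eqn:non-degeneracy} for the bi-transvectants that occur — this is the computational heart, made tractable by keeping each $c$ small, which is exactly what forces the choice of $T$ to depend on the residue of $n$ modulo a small number. Alongside it sit two pieces of bookkeeping: arranging all the intermediate reductions so that the accumulated stable-rationality level stays at $13$, and checking the almost-freeness hypotheses — that general subspaces of the bi-degree forms in play, and general triples of $(1,1)$-forms, have trivial stabilizer — at every stage.
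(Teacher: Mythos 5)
The paper's proof of this corollary is far simpler than what you propose, and does not use the double-bundle method at all. Consider the direct sum $W=V_{1,1}^{\oplus3}\oplus V_{a,n}$ (for $a\in\{1,3\}$), viewed simultaneously as the trivial $\overline{G}$-linearized vector bundle over $V_{a,n}$ and over $V_{1,1}^{\oplus3}$. For $n$ odd and large enough (one may ignore small $n$ by a dimension count), $\overline{G}$ acts almost freely on both bases, so two applications of the (affine) no-name lemma give
\begin{equation*}
(V_{a,n}/\SLSL)\times\C^{12}\;\sim\;(V_{1,1}^{\oplus3}/\SLSL)\times\C^{\dim V_{a,n}},
\end{equation*}
and Lemma~\ref{stable rational basic} finishes: $V_{a,n}/\SLSL$ is stably rational of level $12$, hence ${\proj}V_{a,n}/\SLSL$ of level $13$ because $V_{a,n}/\SLSL\sim\C^\times\times({\proj}V_{a,n}/\SLSL)$. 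No bi-transvectant, Grassmannian, or non-degeneracy check is needed.

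Your plan, by contrast, has concrete problems. First, verifying the non-degeneracy condition \eqref{eqn:non-degeneracy} for a suitable $T$ on $V_{1,n}$ is nowhere done in the paper and is not automatic — that is precisely the labor-intensive part (\S\ref{sec:proof}), and it is carried out only for $V_{3,b}$. Second, your reduction ``peeling off projective-space factors until the base is $V_{1,1}^{\oplus3}$'' is vague and, as sketched, risks circularity: the stable rationality of the Grassmannian quotient you land on is exactly Propositions~\ref{Grassmann stable rational}--\ref{Grassmann stable rational II}, whose proofs in turn rely on this very corollary. Third, what you would be proving is essentially full rationality of ${\proj}V_{a,n}/\SLSL$, a much stronger statement than the stable rationality of level $13$ actually needed here; aiming for the weaker statement is exactly what makes the simple sandwich argument suffice and what makes the level come out to $13=\dim V_{1,1}^{\oplus3}+1$ for free.
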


\begin{proof}
We treat the case of $V_{1,n}$. 
For dimensional reason we may assume $n>3$. 
Then the group $\overline{G}$ acts on $V_{1,n}$ almost freely. 
Hence we may apply the no-name lemma to both projections 
$V_{1,1}^{\oplus3}\oplus V_{1,n} \to V_{1,n}$ and 
$V_{1,1}^{\oplus3}\oplus V_{1,n}\to V_{1,1}^{\oplus3}$ to see that 
\begin{equation}\label{eqn:prf Cor 3.2}
(V_{1,n}/{\SLSL})\times{\C}^{12} \sim (V_{1,1}^{\oplus3}/{\SLSL})\times{\C}^{2n+2}. 
\end{equation}
By Lemma \ref{stable rational basic}, $V_{1,n}/{\SLSL}$ is stably rational of level $12$. 
Since $V_{1,n}/{\SLSL}$ is birational to ${\C}^{\times}\times({\proj}V_{1,n}/{\SLSL})$, 
our assertion is proved. 
The case of $V_{3,n}$ is similar: 
just replace $V_{1,n}$ by $V_{3,n}$ in this argument, now with $n>1$. 
The only change is that the factor ${\C}^{2n+2}$ in \eqref{eqn:prf Cor 3.2} is replaced by ${\C}^{4n+4}$.  
\end{proof}

\begin{proposition}\label{Grassmann stable rational}
When $n>1$ is odd, $G(3, V_{3,n})/{\SLSL}$ is stably rational of level $2$. 
\end{proposition}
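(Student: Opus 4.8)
The plan is to reduce to the cases already handled, namely to the stable rationality of $\proj V_{3,n}/\SLSL$ (Corollary~\ref{linear system stable rational}) and, ultimately, to the rationality of $V_{1,1}^{\oplus 3}/\SLSL$ (Lemma~\ref{stable rational basic}), by realizing $G(3, V_{3,n})$ birationally and $\SLSL$-equivariantly as a tower of bundles with rational fibres. First I would record two reformulations. Parametrizing a net (a $3$-dimensional subspace) by an ordered basis gives the $\SLSL$-equivariant birational equivalence
\begin{equation*}
G(3, V_{3,n})/\SLSL\ \sim\ \bigl(V_3\otimes V_n\otimes\C^3\bigr)\big/\bigl(\GL_3\times\SLSL\bigr),
\end{equation*}
where $\GL_3$ acts on $\C^3$, the first $\SL_2$ on $V_3$, the second on $V_n$; this turns the problem into one about a triple tensor product. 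Secondly, the $\SLSL$-action on $G(3, V_{3,n})$ factors through $\PGLPGL$ (the scalars act trivially), and for odd $n>1$ this action is almost free, because a general net of curves of bidegree $(3,n)$ on $\proj^1\times\proj^1$ has no nontrivial automorphism — this is what makes the no-name lemma and the slice method applicable.

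The heart of the argument is to peel off the two $\SL_2$-factors from $(V_3\otimes V_n\otimes\C^3)/(\GL_3\times\SLSL)$ one at a time, using the slice method together with the fact that $\GL_k$ is a special group. Write $V_3\otimes V_n\otimes\C^3=\hom(V_3^{*},V_n\otimes\C^3)$; since $\dim V_3=4$, a general such map is injective, and sending it to its image exhibits the locus of injective maps as a $\GL_4$-torsor over $G(4,V_n\otimes\C^3)$ on which the first $\SL_2$ sits inside $\GL_4$. As $\GL_4$ is special, this torsor is birationally trivial, and one obtains
\begin{equation*}
G(3, V_{3,n})/\SLSL\ \sim\ \bigl(G(4,V_n\otimes\C^3)/\overline H\bigr)\times\bigl(\GL_4/\SL_2\bigr),
\end{equation*}
where $\overline H$ is the faithful quotient of $\GL_3\times\SL_2$ acting on $V_n\otimes\C^3$, and the second factor is rational: indeed $\GL_4/\SL_2\sim V_3^{\oplus 4}/\SL_2$, and since a general binary cubic has finite ($\cong\Z/3$) $\SL_2$-stabilizer, a slice argument gives $V_3^{\oplus 4}/\SL_2\sim\mathbb{A}^1\times(V_3^{\oplus 3}/(\Z/3))$, which is rational as any linear quotient by $\Z/3$ is. Rewriting $G(4,V_n\otimes\C^3)$ again as a $\GL_4$-quotient of a tensor space and repeating the manoeuvre with the second $\SL_2$ (acting on $V_n$, now of unbounded dimension, so splitting into the cases $n\le 11$ and $n\ge 13$ according to whether a general map $V_n^{*}\to\C^{12}$ is injective or surjective) peels off the second $\SL_2$ at the cost of a further rational factor $\GL_{n+1}/\SL_2$ (or, for large $n$, a factor $\GL_{12}/(\SL_2\times\GL_3)$), reducing the problem to the rationality of a quotient of a Grassmannian of the matrix space $\C^4\otimes\C^3$ by $\GL_4\times\GL_3$ (in the small-$n$ case) or of an analogous matrix-group homogeneous space (in the large-$n$ case).

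These remaining quotients carry no $\SL_2$-factor and are handled by the same slice-and-torsor technique and standard rationality facts about quotients of small matrix pencils; because every fibre and every auxiliary factor produced along the way is an affine space, the only contribution to the ``level'' is from the $\mathbb{G}_m$'s that persist through the torsor trivializations and the non-faithfulness of the Kronecker actions, and a careful tally of these yields level $2$. The main obstacle is precisely this accounting — ensuring one does not lose more than two $\mathbb{G}_m$'s — together with the several case distinctions and the routine but indispensable checks of almost-freeness at each stage. (A quicker but weaker route, worth keeping in reserve: the flag variety $\mathrm{Fl}(1,2,3;V_{3,n})$ is an iterated projective bundle over $G(3,V_{3,n})$ with fibre birational to $\proj^{3}$ and over $\proj V_{3,n}$ with fibre birational to $\proj^{8n+3}$; since $3$ and $n$ are odd, the tautological bundles twist into $\PGLPGL$-linearized ones at each stage, so the no-name lemma gives $\proj^{3}\times(G(3,V_{3,n})/\SLSL)\sim\proj^{8n+3}\times(\proj V_{3,n}/\SLSL)$, which by Corollary~\ref{linear system stable rational} is rational once $8n+3\ge 13$, i.e.\ for all odd $n>1$ — this already proves stable rationality of level $3$, and one then sharpens the bookkeeping, or uses the more economical reduction above, to bring it down to $2$.)
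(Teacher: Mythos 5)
Your parenthetical ``quick route'' is close in spirit to the paper's argument, and the sharpening to level $2$ that you defer is precisely what the paper does. Instead of the full flag variety (fibre of dimension $3$ over $G(3, V_{3,n})$), use only $\proj\mathcal{F}$, the projectivization of the universal rank-$3$ sub-bundle $\mathcal{F}$ over $G(3, V_{3,n})$. This is a $\proj^2$-bundle over $G(3, V_{3,n})$; since $\mathcal{F}$ has odd rank, the twist $\mathcal{F}\otimes\det\mathcal{F}$ kills the $(\pm1,\mp1)$-action and the no-name lemma gives ${\proj}\mathcal{F}/{\SLSL}\sim\proj^2\times(G(3,V_{3,n})/{\SLSL})$. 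Now project $\proj\mathcal{F}$ the other way, to ${\proj}V_{3,n}$: the fibre over ${\C}l$ is the set of $3$-planes containing $l$, so $\proj\mathcal{F}$ is the relative Grassmannian $G(2,\mathcal{G})$ of the rank-$(4n+3)$ universal quotient $\mathcal{G}$. Twisting $\mathcal{G}$ by $\mathcal{O}_{{\proj}V_{3,n}}(1)$ and applying no-name again gives $G(2,\mathcal{G})/{\SLSL}\sim G(2,\C^{4n+3})\times({\proj}V_{3,n}/{\SLSL})$, and $\dim G(2,\C^{4n+3})=8n+2\geq 13$ for odd $n>1$, so this is rational by Corollary~\ref{linear system stable rational}. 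Thus ${\proj}^2\times(G(3,V_{3,n})/{\SLSL})$ is rational, which is the claim. The level-$3$ statement your flag argument yields would in fact have been enough for the application in \S\ref{sec:proof} (there $N=8n\geq8$ whenever $c=3$), but it does not establish the proposition as stated.

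Your ``main'' route is not a proof as it stands: the concluding assertion that ``a careful tally yields level $2$'' is exactly the content that would need to be supplied, and the intermediate reductions are only sketched (the case split for large and small $n$, the identification of the residual matrix-pencil quotient, the bookkeeping of the $\mathbb{G}_m$'s lost to non-faithful actions). Moreover, at least one step does not go through as written: the claim $V_3^{\oplus4}/{\SL}_2\sim\mathbb{A}^1\times(V_3^{\oplus3}/(\Z/3))$ from a slice through a cubic with $\Z/3$-stabilizer. Luna's slice theorem only provides an \'etale-local slice; to get a Zariski-local $(\SL_2,\Z/3)$-section of $V_3$ one needs an explicit construction and a check that the remaining normalizer action really is just $\Z/3$, which is delicate because the stabilizer drops to the trivial group as soon as a second generic cubic is adjoined. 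A cleaner way to dispatch ${\GL}_4/{\SL}_2\sim V_3^{\oplus4}/{\SL}_2$ is to apply the no-name lemma to the projection onto the first two summands, where ${\SL}_2$ already acts almost freely, reducing to $V_3^{\oplus2}/{\SL}_2$, whose rationality is covered by Katsylo \cite{Ka2}. Even with that fix, the remainder of the main route is a program rather than a proof.
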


\begin{proof}
Let $\mathcal{F}\to G(3, V_{3,n})$ be the universal sub vector bundle of rank $3$, 
on which ${\SLSL}$ acts equivariantly. 
The elements $(\pm1, \mp1)\in{\SLSL}$ act on $\mathcal{F}$ by multiplication by $-1$. 
Since $\mathcal{F}$ has odd rank, 
they act on the line bundle ${\det}\mathcal{F}$ also by $-1$. 
Hence the bundle $\mathcal{F}'=\mathcal{F}\otimes{\det}\mathcal{F}$ is ${\PGLPGL}$-linearized. 
Note that ${\proj}\mathcal{F}$ is canonically identified with ${\proj}\mathcal{F}'$. 
Since ${\PGLPGL}$ acts on $G(3, V_{3,n})$ almost freely, 
we can apply the no-name lemma to $\mathcal{F}'$ to see that 
\begin{equation*}
{\proj}\mathcal{F}/{\SLSL} \sim {\proj}\mathcal{F}'/{\SLSL} \sim {\proj}^2\times(G(3, V_{3,n})/{\SLSL}). 
\end{equation*}
Thus it suffices to show that ${\proj}\mathcal{F}/{\SLSL}$ is rational. 

Regarding ${\proj}\mathcal{F}$ as an incidence in $G(3, V_{3,n})\times{\proj}V_{3,n}$, 
we have second projection ${\proj}\mathcal{F}\to{\proj}V_{3,n}$. 
Its fiber over ${\C}l\in{\proj}V_{3,n}$ is 
the sub Grassmannian in $G(3, V_{3,n})$ of $3$-planes containing ${\C}l$, 
and hence identified with $G(2, V_{3,n}/{\C}l)$. 
Therefore, if $\mathcal{G}\to{\proj}V_{3,n}$ is the universal quotient bundle of rank ${\dim}V_{3,n}-1$, 
then ${\proj}\mathcal{F}$ is identified with the relative Grassmannian $G(2, \mathcal{G})$. 
The elements $(\pm1, \mp1)\in{\SLSL}$ act on $\mathcal{G}$ by multiplication by $-1$, 
and also on $\mathcal{O}_{{\proj}V_{3,n}}(1)$ by $-1$. 
Thus the bundle 
$\mathcal{G}'=\mathcal{G}\otimes\mathcal{O}_{{\proj}V_{3,n}}(1)$ 
is ${\PGLPGL}$-linearized, 
and $G(2, \mathcal{G})$ is canonically isomorphic to $G(2, \mathcal{G}')$. 
Since ${\PGLPGL}$ acts on ${\proj}V_{3,n}$ almost freely, 
we can use the no-name lemma to trivialize the ${\PGLPGL}$-bundle $\mathcal{G}'$ 
locally in the Zariski topology. 
Hence we have 
\begin{equation*}
G(2, \mathcal{G}')/{\SLSL} \sim G(2, {\C}^{4n+3})\times({\proj}V_{3,n}/{\SLSL}). 
\end{equation*}
Since ${\dim}\, G(2, {\C}^{4n+3}) > 13$ for $n>1$, 
our assertion follows from Corollary \ref{linear system stable rational}. 
\end{proof}

We also treat $G(3, V_{3,1})$ which is excluded above. 

\begin{proposition}\label{Grassmann stable rational II}
The quotient $G(3, V_{3,1})/{\SLSL}$ is stably rational of level $5$. 
\end{proposition}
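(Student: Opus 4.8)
Here is a plan. First, note \emph{why} $n=1$ is genuinely different from the cases handled in Proposition~\ref{Grassmann stable rational}: there one reduces, via the no-name lemma applied over $\proj V_{3,n}$, to a stable rationality statement for $G(2,\mathcal{G}')/\PGLPGL$, and this uses that $\PGLPGL$ acts almost freely on $\proj V_{3,n}$. For $n=1$ this fails: a general curve of bidegree $(3,1)$ on $\proj^1\times\proj^1$ is the graph of a general degree-$3$ self-map of $\proj^1$, and such a map has a non-trivial automorphism group (a Klein four-group), so $\PGLPGL$ does not act almost freely on $\proj V_{3,1}$. The remedy is to descend one step to $G(2,V_{3,1})$, over which $\PGLPGL$ \emph{does} act almost freely: sending a pencil of $(3,1)$-curves to its base locus gives a $\PGLPGL$-equivariant birational map
\begin{equation*}
G(2,V_{3,1}) \;\sim\; \operatorname{Sym}^6(\proj^1\times\proj^1)
\end{equation*}
(a general pencil of $(3,1)$-curves has six base points, and six general points of $\proj^1\times\proj^1$ impose independent conditions on $V_{3,1}=H^0(\mathcal{O}(3,1))$, cutting it to a two-dimensional subspace), and $\PGLPGL$ acts almost freely on six general points of $\proj^1\times\proj^1$.

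To connect $G(3,V_{3,1})$ to $G(2,V_{3,1})$ I would use the flag variety $\operatorname{Fl}(2,3;V_{3,1})$ of pairs $U\subset W$ with $\dim U=2$ and $\dim W=3$. Its projection to $G(3,V_{3,1})$ is the $\proj^2$-bundle $\proj(\Lambda^2\mathcal{F})$, where $\mathcal{F}$ is the tautological rank-$3$ subbundle; since $(\pm1,\mp1)$ acts on $\mathcal{F}$ by $-1$ it acts trivially on $\Lambda^2\mathcal{F}$, which is therefore $\PGLPGL$-linearized, and $\PGLPGL$ acts almost freely on $G(3,V_{3,1})$ (a general $3$-plane has trivial stabilizer, by the usual dimension count over eigenspace decompositions of elements of $\PGLPGL$). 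So the no-name lemma gives
\begin{equation*}
\operatorname{Fl}(2,3;V_{3,1})/\SLSL \;\sim\; \proj^2\times\bigl(G(3,V_{3,1})/\SLSL\bigr),
\end{equation*}
and it suffices to bound the level of stable rationality of $\operatorname{Fl}(2,3;V_{3,1})/\SLSL$. For this I would use the other projection $\operatorname{Fl}(2,3;V_{3,1})\to G(2,V_{3,1})$, a $\proj^5$-fibration with fibre $\proj(V_{3,1}/U)$. The associated rank-$6$ bundle $\mathcal{Q}=V_{3,1}/\mathcal{S}$ ($\mathcal{S}$ the tautological rank-$2$ subbundle) again carries the $-1$ action of $(\pm1,\mp1)$, but now it cannot be twisted into a $\PGLPGL$-linearized bundle, because $\operatorname{Pic}(G(2,V_{3,1}))$ is generated by a line bundle on which $(\pm1,\mp1)$ acts trivially. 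This is the crux: over $G(2,V_{3,1})/\PGLPGL$ the projection becomes a Severi--Brauer fibration of relative dimension $5$ carrying a non-trivial $2$-torsion class $\beta$ (the obstruction to linearizing $\mathcal{S}$, equivalently $\mathcal{Q}$), and this extra twist is what forces the weaker "level $5$" rather than "level $2$". The plan is to split $\beta$ by base change to the conic bundle $\operatorname{Fl}(1,2;V_{3,1})/\PGLPGL\to G(2,V_{3,1})/\PGLPGL$, which carries the same class $\beta$; after that base change the Severi--Brauer fibration acquires a section and becomes birationally trivial, and bookkeeping the resulting projective-space factors (for which the full flag $\operatorname{Fl}(1,2,3;V_{3,1})$ is convenient) reduces the level-$5$ stable rationality of $G(3,V_{3,1})/\SLSL$ to the rationality of that conic bundle times a projective space, hence ultimately to the stable rationality of $G(2,V_{3,1})/\SLSL\sim\operatorname{Sym}^6(\proj^1\times\proj^1)/\PGLPGL$.

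Finally I would prove that $\operatorname{Sym}^6(\proj^1\times\proj^1)/\PGLPGL$ is stably rational: presenting it as $\bigl((\proj^1)^6\times(\proj^1)^6\bigr)/(\PGL_2\times\PGL_2\times S_6)$, I would apply the slice method to each $\PGL_2$ in turn (normalizing three of the six points in each factor), reducing to the quotient of a rational variety by the diagonal $S_6$, and conclude from known rationality results for such quotients of $M_{0,6}$-type moduli. The step I expect to be the main obstacle is the treatment of the class $\beta$ — pinning down exactly how much projective space the Severi--Brauer twist over $G(2,V_{3,1})/\PGLPGL$ costs — together with the rationality of $\operatorname{Sym}^6(\proj^1\times\proj^1)/\PGLPGL$ itself.
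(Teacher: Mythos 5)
Your proposal takes a genuinely different route from the paper, and your diagnosis of the Brauer-type obstruction on $G(2,V_{3,1})$ is sharp and correct: $\operatorname{Pic}(G(2,V_{3,1}))$ is generated by $\det\mathcal{S}^{*}$, on which $(\pm1,\mp1)$ acts trivially (rank $2$), so no twist can descend $\mathcal{S}$ or $\mathcal{Q}=V_{3,1}/\mathcal{S}$ to a $\PGLPGL$-linearized bundle. But the paper sidesteps this entirely rather than confronting it. The trick is to replace the second projection's target: instead of fibring over $\proj V_{3,1}$ (non--almost-free action) or over $G(2,V_{3,1})$ (Brauer obstruction), the paper takes $\mathcal{F}^{\oplus 2}$ over $G(3,V_{3,1})$ and projects $\proj(\mathcal{F}^{\oplus2})\to\proj(V_{3,1}^{\oplus2})$, $(P,\C(v_1,v_2))\mapsto\C(v_1,v_2)$. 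On $\proj(V_{3,1}^{\oplus2})$ the element $(\pm1,\mp1)$ acts on $\sheaf(1)$ by $-1$ (since it acts by the scalar $-1$ on $V_{3,1}^{\oplus2}$), so all the relevant bundles can be twisted to $\PGLPGL$-linearized ones, $\PGLPGL$ does act almost freely, and the no-name lemma applies cleanly on both sides. One then reduces, via $V_{1,1}\oplus V_{3,1}^{\oplus2}$ and the slice method for $V_{1,1}\oplus V_{3,1}\to V_{1,1}$, to Katsylo's rationality of $V_{3,1}/\GL_2$. Both sides deliver $\proj^5$, hence ``level $5$'': the reason for $5$ rather than $2$ is the need for two independent vectors to land in an almost-free base, not the Brauer twist you point to.

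Beyond being more elaborate, your plan has a genuine gap at the end. You propose to ``reduce \dots ultimately to the stable rationality of $G(2,V_{3,1})/\SLSL\sim\operatorname{Sym}^6(\proj^1\times\proj^1)/\PGLPGL$.'' But what your construction actually requires is (stable) rationality of $\operatorname{Fl}(1,2;V_{3,1})/\SLSL$, which you correctly identify as a conic bundle over $G(2,V_{3,1})/\PGLPGL$ carrying the \emph{nontrivial} class $\beta$. Rationality (or stable rationality) of the base does not imply rationality of a conic bundle with nontrivial Brauer class over it; this is precisely the kind of situation where stable rationality can fail or at least requires a separate argument. So the final reduction as stated does not close the loop, even granting the unproved rationality of $\operatorname{Sym}^6(\proj^1\times\proj^1)/\PGLPGL$. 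In addition, the Severi--Brauer base-change step and the level-$5$ bookkeeping are only sketched; the dimension count suggests level $3$ would come out if $\operatorname{Fl}(1,2;V_{3,1})/\SLSL$ were rational, so the arithmetic would need to be redone carefully in any case. The paper's $\proj(V_{3,1}^{\oplus2})$ device is precisely the standard way to avoid all of this.
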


\begin{proof}
In this case the ${\PGLPGL}$-action on ${\proj}V_{3,1}$ is not almost free, 
having the Klein 4-group as a general stabilizer, 
so that we cannot apply the above proof.  But the following modification will work: 
replace $\mathcal{F}$ with $\mathcal{F}^{\oplus2}$, 
and the projection ${\proj}\mathcal{F}\to{\proj}V_{3,1}$ with  
\begin{equation*}
{\proj}(\mathcal{F}^{\oplus2}) \to {\proj}(V_{3,1}^{\oplus2}), \qquad 
(P, {\C}(v_1, v_2))\mapsto{\C}(v_1, v_2), 
\end{equation*}
where $v_1, v_2\in V_{3,1}$ are vectors contained in the $3$-plane $P$. 
Then we can imitate the above argument to deduce that 
\begin{equation*}
{\proj}(\mathcal{F}^{\oplus2})/{\SLSL} \sim {\proj}^5\times(G(3, V_{3,1})/{\SLSL}), 
\end{equation*}
\begin{equation*}
{\proj}(\mathcal{F}^{\oplus2})/{\SLSL} \sim {\proj}^5\times({\proj}(V_{3,1}^{\oplus2})/{\SLSL}). 
\end{equation*}
Thus it suffices to prove that 
${\proj}(V_{3,1}^{\oplus2})/{\SLSL}$ is stably rational of level $5$. 

Consider the representation 
$W=V_{1,1}\oplus V_{3,1}^{\oplus2}$. 
We apply the no-name lemma to both projections 
${\proj}W\dashrightarrow{\proj}(V_{3,1}^{\oplus2})$ and ${\proj}W\dashrightarrow{\proj}(V_{1,1}\oplus V_{3,1})$ 
to see that 
\begin{equation*}
{\C}^4\times({\proj}(V_{3,1}^{\oplus2})/{\SLSL}) \sim 
{\C}^8\times({\proj}(V_{1,1}\oplus V_{3,1})/{\SLSL}). 
\end{equation*}
Using the slice method for the projection $V_{1,1}\oplus V_{3,1}\to V_{1,1}$, 
we then have 
\begin{equation*}
(V_{1,1}\oplus V_{3,1})/{\GLGL} \sim V_{3,1}/{\GL}_2. 
\end{equation*}
Finally, $V_{3,1}/{\GL}_2$ is rational by Katsylo \cite{Ka2}. 
\end{proof}


\section{Proof of Theorem \ref{main}}\label{sec:proof}

Let $b\geq5$ be an odd number. 
In this section we prove that ${\proj}V_{3,b}/{\SLSL}$ is rational (Theorem \ref{main}) 
by executing the method of double bundle explained in \S \ref{ssec:double bundle}. 
In logical order, the proof proceeds in the following line. 

\begin{enumerate}
\item We choose a bi-transvectant 
\begin{equation*}
T=T^{(r,s)}: V_{3,b}\times V_{a',b'} \to V_{a'',b''}
\end{equation*} 
according to Table \ref{value (r, s) etc} below. 
This satisfies that  
$c:={\dim}V_{a',b'} - {\dim}V_{a'',b''}$ is either $1$ or $3$, 
${\dim}V_{3,b}>c\cdot{\dim}V_{a'',b''}$, 
and that both $a'$ and $b'$ are odd. 
\item We check that $T$ satisfies the non-degeneracy condition \eqref{eqn:non-degeneracy} 
by finding vectors $v\in V_{3,b}$, $w_1,\cdots,w_c\in V_{a',b'}$ as in Lemma \ref{non-degeneracy}. 
\item Then, as shown in \S \ref{ssec:double bundle}, $V_{3,b}$ gets birationally realized as 
an ${\SLSL}$-linearized vector bundle $\mathcal{E}$ over $G(c, V_{a',b'})$ 
which is a sub bundle of $V_{3,b}\times G(c, V_{a',b'})$. 
(In case $c=1$, $G(c, V_{a',b'})$ is just ${\proj}V_{a',b'}$.)  
\item Since $3$ and $b$ are odd, the elements $(\pm1, \mp1)\in{\SLSL}$ 
act on $\mathcal{E}$ by multiplication by $-1$. 
\item Since $a'$ and $b'$ are odd, 
$(\pm1, \mp1)$ act on the universal sub bundle $\mathcal{F}$ over $G(c, V_{a',b'})$ also by $-1$. 
Since $\mathcal{F}$ has odd rank ($=c$), $(\pm1, \mp1)$ act on ${\det}\mathcal{F}$ by $-1$. 
Hence $\mathcal{E}\otimes{\det}\mathcal{F}$ is ${\PGLPGL}$-linearized. 
\item It is not difficult to see that ${\PGLPGL}$ acts on $G(c, V_{a',b'})$ almost freely. 
Then by the no-name lemma we have 
\begin{equation*}
{\proj}\mathcal{E}/{\PGLPGL} \sim {\proj}^N \times (G(c, V_{a',b'})/{\PGLPGL}) 
\end{equation*}
as explained in \eqref{eqn:apply twisted no-name}, 
where $N={\dim}\,{\proj}V_{3,b} - {\dim}\, G(c, V_{a',b'})$. 
\item The quotient $G(c, V_{a',b'})/{\SLSL}$ is stably rational of level $\leq N$ 
by Corollary \ref{linear system stable rational} and 
Propositions \ref{Grassmann stable rational} and \ref{Grassmann stable rational II}. 
(see the values of $c$, $(a', b')$, $N$ below.) 
This concludes that ${\proj}V_{3,b}/{\SLSL}$ is rational. 
\end{enumerate}

The bi-transvectant $T^{(r,s)}$ is provided systematically according to 
the remainder $[b]\in{\Z}/5{\Z}$, except the case $b=7$. 

\begin{proposition}\label{non-degeneracy check}
For odd $b\geq5$ 
we set the values of $(r, s)$ and $(a', b')$ (and hence $(a'', b'')$, $c$ and $N$) 
by the following Table \ref{value (r, s) etc}. 
\begin{table}[h]
\caption{Input of bi-transvectant}\label{value (r, s) etc}
\begin{center} 
\begin{tabular}{c|c|c|c|c|c}
   $b$        & $(r, s)$        & $(a', b')$      & $(a'', b'')$    & $c$ & $N$          \\ \hline  
$5n$         & $(3, n)$       & $(3, n)$       & $(0, 4n)$     & $3$ & $8n$         \\ \hline  
$5n+1$     & $(1, 3n+1)$ & $(1, 3n+1)$ & $(2, 2n)$     & $1$ & $14n+4$  \\ \hline  
$5n+2$     & $(3, n)$       & $(3, n)$       & $(0, 4n+2)$ & $1$ & $16n+8$  \\ \hline  
$5n+3$     & $(3, n)$       & $(3, n+1)$   & $(0, 4n+4)$ & $3$ & $8n$        \\ \hline  
$5n+4$     & $(1, 3n+3)$ & $(1, 3n+4)$ & $(2, 2n+2)$ & $1$ & $14n+10$ \\ \hline 
$7$           & $(2, 3)$       &  $(3, 3)$      & $(2, 4)$       & $1$ & $16$ \\ \hline 
\end{tabular}
\end{center}
\end{table}
Here $n$ is even when $b\equiv1, 3 \; (5)$, odd when $b\equiv0, 2, 4 \; (5)$, 
and $n>1$ when $b\equiv 2 \; (5)$. 
Then the above argument (1), ..., (7) works. 
\end{proposition}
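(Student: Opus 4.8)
The plan is to verify, for each of the six rows of Table~\ref{value (r, s) etc}, the three numerical requirements listed in step (1) --- namely that $a'$ and $b'$ are odd, that $c\in\{1,3\}$, and that $\dim V_{3,b}>c\cdot\dim V_{a'',b''}$ --- and then to produce, for each row, explicit test vectors $v\in V_{3,b}$ and $w_1,\dots,w_c\in V_{a',b'}$ satisfying conditions (i)--(iv) of Lemma~\ref{non-degeneracy}. The numerical part is bookkeeping: using $\dim V_{a,b}=(a+1)(b+1)$ one checks e.g. for $b=5n$ that $\dim V_{3,5n}=4(5n+1)=20n+4$ while $c\cdot\dim V_{a'',b''}=3(4n+1)=12n+3$, so the inequality holds, and $N=\dim\proj V_{3,b}-\dim G(c,V_{a',b'})$ works out to the tabulated value; similarly for the other five rows. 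One also records that $n$ even forces $b'=3n+1$ or $3n+4$ odd in the rows $b\equiv1,3\ (5)$, and $n$ odd forces $b'=n$ or $n+1$ odd in the rows $b\equiv0,2,4\ (5)$, which is exactly why the parity constraints on $n$ are imposed; the restriction $n>1$ for $b\equiv2\ (5)$ is needed to make Proposition~\ref{Grassmann stable rational} (rather than \ref{Grassmann stable rational II}) applicable and to keep the dimension inequality strict.

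The substantive work is the non-degeneracy check. For the rows with $s=n$ and $a'=3$ (the transvectants $T^{(3,n)}$), the first factor is the apolar covariant $T^{(3)}\colon V_3\times V_3\to V_0$ and the second is the $n$-th transvectant $T^{(n)}\colon V_b\times V_n\to V_{b-n}$; for the rows with $r=1$ the first factor is $T^{(1)}\colon V_3\times V_1\to V_2$. My approach is to take $v$ and the $w_i$ to be (sums of a few) monomials $X^iY^{3-i}\boxtimes X^jY^{b-j}$, so that $T(v,w_i)$ and $T(\cdot,w_i)$ decompose into the monomial transvectant formulas recalled before Lemma~\ref{pre-apolar non-dege}. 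Condition (ii) then becomes a handful of linear equations among the chosen coefficients; (iii) and (iv) become surjectivity of explicit matrices whose entries are the combinatorial constants $\binom{d}{i}^{-1}\binom{d-e}{i-j}$ (and the analogous $(e-1)$-th ones, whose non-vanishing is controlled by Lemma~\ref{pre-apolar non-dege}, using that $d+2$ is coprime to $e$ in the relevant cases). Because $c$ is only $1$ or $3$, the kernel one needs to prescribe is tiny, so a sparse, almost-diagonal choice of $v$ suffices; the surjectivity in (iii)--(iv) is where the bulk of the elementary but fiddly computation lives.

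Once the non-degeneracy condition \eqref{eqn:non-degeneracy} is in hand for a given row, steps (3)--(7) are essentially formal and already spelled out in \S\ref{ssec:double bundle}. One gets the $\SLSL$-linearized subbundle $\mathcal{E}\subset V_{3,b}\times G(c,V_{a',b'})$; the $(\pm1,\mp1)$ action is $-1$ on $\mathcal{E}$ because $3$ and $b$ are odd, and $-1$ on $\det\mathcal{F}$ because $\mathcal{F}$ has odd rank $c$ and $a',b'$ are odd, so $\mathcal{E}\otimes\det\mathcal{F}$ descends to a ${\PGLPGL}$-linearized bundle; almost-freeness of ${\PGLPGL}$ on $G(c,V_{a',b'})$ (a general $c$-dimensional space of bi-forms has trivial stabilizer, for $c\le 3$ and the bidegrees in play) lets the no-name lemma give $\proj\mathcal{E}/{\PGLPGL}\sim\proj^N\times(G(c,V_{a',b'})/{\PGLPGL})$; and finally $G(c,V_{a',b'})/{\SLSL}$ is stably rational of level $\le N$ by Corollary~\ref{linear system stable rational} when $c=1$ (noting $a'=1$ or $3$ and $b'$ odd) and by Proposition~\ref{Grassmann stable rational} (or \ref{Grassmann stable rational II} when $b'=1$) when $c=3$, after checking $N\ge13$, resp.\ $N\ge 2$ or $5$, from the table.

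The main obstacle is the case analysis in step (2): each of the six rows requires its own choice of $v$ and $w_i$ and its own verification of (iii) and (iv), and while each individual verification reduces to checking that an explicit small matrix of transvectant coefficients has full rank, assembling these choices uniformly (and handling the sporadic $b=7$ row, where $(r,s)=(2,3)$ and neither transvectant is apolar or near-apolar, so the monomial formula~\eqref{eqn:transvectant} must be used directly) is the laborious heart of the argument. Everything else --- the dimension count, the parity discussion, and the descent in (3)--(7) --- is routine given the results already established.
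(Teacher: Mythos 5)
Your proposal describes the correct framework, which is the same one the paper uses: fix the bi-transvectant from the table, verify the conditions of Lemma~\ref{non-degeneracy} with explicit test vectors, then run the formal steps (3)--(7). But you never actually produce the vectors $v\in V_{3,b}$ and $w_1,\dots,w_c\in V_{a',b'}$, nor carry out the surjectivity checks (iii) and (iv), for any of the six rows. That is not a minor omission. As the introduction itself warns, ``the bulk of proof is devoted to verifying non-degeneracy,'' and the rest of \S\ref{sec:proof} consists precisely of writing down, case by case, sparse monomial combinations for $v$ and the $w_i$ and showing that $T(v,\cdot)$ and $T(\cdot,w_i)$ hit the whole target. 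Finding these is not automatic: in the $b\equiv3\ (5)$ case, for instance, the paper must split further according to $n\bmod 5$ and use a visibly different $v$ when $n\equiv4\ (5)$, because otherwise a coefficient in Lemma~\ref{pre-apolar non-dege} vanishes. Your claim that ``a sparse, almost-diagonal choice of $v$ suffices'' is exactly the assertion that needs proof. So what you have is a correct outline, but step (2) --- the actual mathematical content of the proposition --- is missing.

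There is also a small inaccuracy in your framing. The restriction $n>1$ when $b\equiv2\ (5)$ has nothing to do with choosing between Proposition~\ref{Grassmann stable rational} and Proposition~\ref{Grassmann stable rational II}; those concern $G(3,\cdot)$, whereas for $b\equiv2\ (5)$ one has $c=1$. The real reason is that $n=1$ (i.e.\ $b=7$) would send the construction to $G(1,V_{3,1})={\proj}V_{3,1}$, on which ${\PGLPGL}$ is \emph{not} almost free (the Klein $4$-group is a general stabilizer). That breaks step (6) and the applicability of Corollary~\ref{linear system stable rational}, which is exactly why the paper gives $b=7$ its own row with $(r,s)=(2,3)$ and $(a',b')=(3,3)$.
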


Notice that we have to separate the case $b=7$ because the ${\PGLPGL}$-action on 
$G(c, V_{a',b'})={\proj}V_{3,1}$ is not almost free, 
so that the step (6) would not work with $(r, s)=(a', b')=(3, 1)$.

For the proof of Proposition \ref{non-degeneracy check}, we are now only left with the step (2) to fill out. 
In the remainder of the article 
we choose vectors $v\in V_{3,b}$ and $w_1,\cdots,w_c\in V_{a',b'}$ 
that should satisfy the conditions (i), ..., (iv) of Lemma \ref{non-degeneracy}.  
In any case the equality $T(v, w_i)=0$ (the condition (ii)) can be checked with a direct calculation 
using the formula of $T=T^{(r,s)}$ given in \S \ref{ssec:transvectant}. 
We leave this to the reader. 
The linear independence of $w_1,\cdots,w_c$ (the condition (i)) can be seen at a glance, and we also omit it. 
Note that this is even trivial when $c=1$. 
Thus what we are going to verify below is the surjectivity conditions (iii) and (iv).  

We shall use the notation $([x, y], [X, Y])$ for the bi-homogeneous coordinate of ${\proj}^1\times{\proj}^1$. 
Thus elements of $V_{a,b}$ will be expressed as 
\begin{equation*}
\sum_{i}F_i(x, y)G_i(X, Y),  
\end{equation*}
where $F_i$, $G_i$ are binary forms of degree $a$, $b$ respectively.


\subsection{The case $b\equiv0 \; (5)$}\label{ssec:0}

We take vectors $v\in V_{3,5n}$, $\vec{w}=(w_1, w_2, w_3)\in (V_{3,n})^3$ by 
\begin{equation*}
v = \binom{5n}{n}X^nY^{4n}x^3 + 3\binom{5n}{2n}X^{2n}Y^{3n}x^2y + 
       3\binom{5n}{2n}X^{3n}Y^{2n}xy^2 + \binom{5n}{n}X^{4n}Y^ny^3,         
\end{equation*}
\begin{eqnarray*}
w_1 &=& Y^nx^3 - X^nx^2y,  \\
w_2 &=& Y^nx^2y - X^nxy^2,  \\
w_3 &=& Y^nxy^2 - X^ny^3. 
\end{eqnarray*}

The map $T(v,\cdot)\colon V_{3,n}\to V_{0,4n}$ is surjective because 
\begin{equation*}
T(v, V_nx^3) = {\C}\langle X^{4n}, \cdots, X^{3n}Y^n\rangle,  \quad 
T(v, V_nx^2y) = {\C}\langle X^{3n}Y^n, \cdots, X^{2n}Y^{2n}\rangle, 
\end{equation*}
\begin{equation*}
T(v, V_nxy^2) = {\C}\langle X^{2n}Y^{2n}, \cdots, X^nY^{3n}\rangle,  \quad 
T(v, V_ny^3) = {\C}\langle X^nY^{3n}, \cdots, Y^{4n}\rangle.  
\end{equation*}

To see the surjectivity of $T(\cdot, \vec{w})\colon V_{3,5n}\to V_{0,4n}^{\oplus3}$, 
we note that 
\begin{equation*}
T(V_{5n}x^3\oplus V_{5n}y^3, \vec{w}) = (V_{0,4n}, 0, V_{0,4n}) \subset V_{0,4n}^{\oplus3}. 
\end{equation*}
Since $T(V_{5n}x^2y, w_2) = V_{0,4n}$, 
then $(0, V_{0,4n}, 0)\subset V_{0,4n}^{\oplus3}$ is also contained in 
the image of $T(\cdot,\vec{w})$.


\subsection{The case $b\equiv1 \; (5)$}\label{ssec:1}

We take the following vectors of $V_{3,5n+1}$ and $V_{1,3n+1}$: 
\begin{eqnarray*}\notag
v  &=&  \binom{5n+1}{2n}X^{3n+1}Y^{2n}x^3 + 3\binom{5n+1}{n}X^{4n+1}Y^{n}x^2y \\
    & &  \quad + 3\binom{5n+1}{2n}X^{2n}Y^{3n+1}xy^2 + \binom{5n+1}{n}X^{n}Y^{4n+1}y^3,  \\
w &=& (X^{3n+1}-Y^{3n+1})x - (X^nY^{2n+1}-X^{2n+1}Y^n)y. 
\end{eqnarray*} 

We shall prove the surjectivity of $T(v, \cdot)\colon V_{1,3n+1}\to V_{2,2n}$ 
by showing that its kernel is $1$-dimensional. 
Suppose we have a vector $w'=G_+(X, Y)x+G_-(X, Y)y$ in $V_{1,3n+1}$ with $T(v, w')=0$. 
Then we have 
\begin{eqnarray*}
T^{(3n+1)}(X^{n}Y^{4n+1}, G_+)   &=& b_0T^{(3n+1)}(X^{2n}Y^{3n+1}, G_-),   \\
T^{(3n+1)}(X^{2n}Y^{3n+1}, G_+) &=& b_1T^{(3n+1)}(X^{4n+1}Y^{n}, G_-),   \\
T^{(3n+1)}(X^{4n+1}Y^{n}, G_+)   &=& b_2T^{(3n+1)}(X^{3n+1}Y^{2n}, G_-). 
\end{eqnarray*}
for suitable constants $b_j$. 
Expanding 
$G_{\pm}=\sum_{i}\alpha_{i}^{\pm}X^{3n+1-i}Y^{i}$, 
we obtain 
\begin{eqnarray*}
\alpha_i^+ = c_{1i}\alpha_{i+n}^- \: \: (0\leq i\leq n),           &  &   \alpha_i^- = 0 \: \: (0\leq i\leq n-1),  \\
\alpha_i^+ = c_{2i}\alpha_{i+2n+1}^{-} \: \: (0\leq i\leq n),   &  &   \alpha_i^+ = 0 \: \: (n+1\leq i\leq 2n),   \\
\alpha_{i+n}^+ = c_{3i}\alpha_i^- \: \: (n+1\leq i\leq 2n+1),  &  &  \alpha_i^- = 0 \: \: (2n+2\leq i\leq 3n+1),   
\end{eqnarray*}
for some fixed constants $c_{\ast}$. 
This reduces to the relations 
\begin{equation*}
\alpha_0^+ = d_1\alpha_{3n+1}^+ = d_2\alpha_{n}^- = d_3\alpha_{2n+1}^-
\end{equation*} 
where $d_j$ are appropriate constants, 
and $\alpha_{i}^{\pm}=0$ for other $i$. 
This implies our assertion. 

The surjectivity of  
$T(\cdot, w)\colon V_{3,5n+1}\to V_{2,2n}$ 
can be seen by noticing that 
\begin{equation*}
T(V_{5n+1}y^3, w) = V_{2n}y^2, \qquad T(V_{5n+1}x^3, w) = V_{2n}x^2, 
\end{equation*}
\begin{equation*}
T(V_{5n+1}xy^2, (X^{3n+1}-Y^{3n+1})x) = V_{2n}xy. 
\end{equation*}


\subsection{The case $b\equiv2 \; (5)$}\label{ssec:2}

We take vectors in $V_{3,5n+2}$ and $V_{3,n}$ by  
\begin{equation*}
v = X^{n}Y^{4n+2}x^3 + X^{2n+1}Y^{3n+1}x^2y + X^{3n+1}Y^{2n+1}xy^2 + X^{4n+2}Y^{n}y^3, 
\end{equation*}
\begin{equation*}
w = Y^nx^2y - X^nxy^2. 
\end{equation*} 

The map $T(v, \cdot)\colon V_{3,n}\to V_{0,4n+2}$ is surjective because 
\begin{eqnarray*}
T(v, V_nx^3) &=& {\C}\langle X^{4n+2}, \cdots, X^{3n+2}Y^n\rangle, \\
T(v, V_nx^2y) &=& {\C}\langle X^{3n+1}Y^{n+1}, \cdots, X^{2n+1}Y^{2n+1}\rangle,  \\
T(v, V_nxy^2) &=& {\C}\langle X^{2n+1}Y^{2n+1}, \cdots, X^{n+1}Y^{3n+1}\rangle,  \\
T(v, V_ny^3) &=& {\C}\langle X^{n}Y^{3n+2}, \cdots, Y^{4n+2}\rangle. 
\end{eqnarray*}

On the other hand, we have 
$T(V_{5n+2}xy^2, w)=V_{0,4n+2}$ 
so that the map 
$T(\cdot, w):V_{3,5n+2}\to V_{0,4n+2}$ 
is also surjective.


\subsection{The case $b\equiv3 \; (5)$}\label{ssec:3}

We take the following vectors of $V_{3,5n+3}$ and $V_{3,n+1}$ 
according to the remainder of $n$ modulo $5$: 

\noindent
(1) When $n\nequiv4$ mod $5$, we set 
\begin{eqnarray*}\notag
v  &=&  \binom{5n+3}{n}X^{n}Y^{4n+3}x^3 + \binom{5n+3}{2n+1}X^{2n+1}Y^{3n+2}x^2y \\
    & &  + \binom{5n+3}{2n+1}X^{3n+2}Y^{2n+1}xy^2 + \binom{5n+3}{n}X^{4n+3}Y^{n}y^3,  \\
w_1 &=& X^{n+1}y^3 + Y^{n+1}xy^2, \\
w_2 &=& X^{n+1}xy^2 + Y^{n+1}x^2y,  \\
w_3 &=& X^{n+1}x^2y + Y^{n+1}x^3.  
\end{eqnarray*} 

\noindent
(2) When $n\equiv4$ mod $5$, we denote $n=2m$ (remember $n$ is even) and set 
\begin{eqnarray*}\notag
v  &=&  \left\{ \frac{7m+3}{m+1}\frac{5m+2}{3m+2}\binom{5n+3}{m}X^{m}Y^{9m+3} + X^{9m+5}Y^{m-2}\right\} x^3 \\ 
    & &  + 3\frac{5m+2}{3m+2}\binom{5n+3}{3m+1}X^{3m+1}Y^{7m+2}x^2y  + 
         3\binom{5n+3}{5m+2}X^{5m+2}Y^{5m+1}xy^2  \\
       & &  + \frac{5m+3}{3m+1}\binom{5n+3}{7m+3}X^{7m+3}Y^{3m}y^3,  
\end{eqnarray*} 
and use the same $w_i$ as above. 

When $n\nequiv4$ mod $5$, we have no $0\leq j\leq n+1$ with $j(5n+5)=(i+1)(n+1)$ 
for $i=n$, $2n+1$, $3n+2$, $4n+3$. 
Hence by Lemma \ref{pre-apolar non-dege}, for those $i$ the bilinear map 
\begin{equation}\label{eqn:pre-apolar b==4}
T^{(n)} : {\C}X^iY^{5n+3-i} \times {\C}X^{n+1-j}Y^j \to {\C}X^{i-j+1}Y^{4n+3-i+j} 
\end{equation}
is non-degenerate for any $j$, as far as the indices are non-negative. 
It follows that 
\begin{eqnarray*}\notag
T(v, V_{n+1}x^3)  &=& {\C}\langle X^{4n+4}, \cdots, X^{3n+3}Y^{n+1} \rangle,  \\
T(v, V_{n+1}x^2y) &=& {\C}\langle X^{3n+3}Y^{n+1}, \cdots, X^{2n+2}Y^{2n+2} \rangle,  \\
T(v, V_{n+1}xy^2)  &=& {\C}\langle X^{2n+2}Y^{2n+2}, \cdots, X^{n+1}Y^{3n+3} \rangle,  \\
T(v, V_{n+1}y^3)  &=& {\C}\langle X^{n+1}Y^{3n+3}, \cdots, Y^{4n+4} \rangle,  
\end{eqnarray*}
whence the map 
$T(v,\cdot):V_{3,n+1}\to V_{0,4n+4}$ is surjective. 
We leave it to the reader to check similar surjectivity when $n\equiv4 \; (5)$. 
In that case, since $m\equiv2\;(5)$, we have no $j$ with $j(5n+5)=(i+1)(n+1)$ for 
$i=m+k(n+1)$, $0\leq k\leq3$, and $i=9m+5$. 
Hence for those $i$ the map \eqref{eqn:pre-apolar b==4} is non-degenerate for any relevant $j$, 
again by Lemma \ref{pre-apolar non-dege}. 

To see that 
\begin{equation*}
T(\cdot, \vec{w}) = (T(\cdot, w_1),T(\cdot, w_2), T(\cdot, w_3)) : V_{3,5n+3}\to V_{0,4n+4}^{\oplus3} 
\end{equation*}
is surjective (regardless of $[n]\in{\Z}/5{\Z}$), 
we note that the bilinear maps 
\begin{equation*}
T^{(n)}(\cdot, X^{n+1}):{\C}X^iY^{5n+3-i}\to{\C}X^{i+1}Y^{4n+3-i} 
\end{equation*} 
\begin{equation*}
T^{(n)}(\cdot, Y^{n+1}):{\C}X^iY^{5n+3-i}\to{\C}X^{i-n}Y^{5n+4-i} 
\end{equation*} 
are non-degenerate whenever the indices are non-negative. 
It follows that 
\begin{equation*}
T(V_{5n+3}x^3, \vec{w}) = ({\C}\langle X^{4n+4},\cdots, XY^{4n+3}\rangle, 0, 0),  
\end{equation*}
\begin{equation*}
T({\C}\langle X^nY^{4n+3}x^2y, X^{2n+1}Y^{3n+2}xy^2, X^{3n+2}Y^{2n+1}y^3 \rangle, \vec{w}) 
\supset ({\C}Y^{4n+4}, 0, 0),  
\end{equation*}
so that $(V_{0,4n+4}, 0, 0)\subset V_{0,4n+4}^{\oplus3}$ is contained in the image of $T(\cdot, \vec{w})$. 
Similarly, we see that 
$(0, 0, V_{0,4n+4})\subset V_{0,4n+4}^{\oplus3}$ is contained in the image too. 
Finally, since $T(\cdot, w_2)$ maps the space 
$V_{5n+3}x^2y\oplus V_{5n+3}xy^2$ onto $V_{0,4n+4}$, 
we find using the above results that $(0, V_{0,4n+4}, 0)$ is also contained in the image.


\subsection{The case $b\equiv4 \; (5)$}\label{ssec:4}

We take the following vectors of $V_{3,5n+4}$ and $V_{1,3n+4}$: 
\begin{eqnarray*}
v &=& \frac{3n+4}{n+2}\frac{3n+4}{n+1}\binom{5n+4}{2n+1}X^{3n+3}Y^{2n+1}x^3  
           + 3\frac{3n+4}{n+1}\binom{5n+4}{n}X^{4n+4}Y^{n}x^2y  \\ 
   & &  -  3\binom{5n+4}{2n+1}X^{2n+1}Y^{3n+3}xy^2  
           -  \frac{n+2}{3n+4}\binom{5n+4}{n}X^{n}Y^{4n+4}y^3, \\
w &=& (X^{3n+4}+Y^{3n+4})x + (X^{2n+3}Y^{n+1}+X^{n+1}Y^{2n+3})y.   
\end{eqnarray*}
 
We shall show that the kernel of 
$T(v, \cdot)\colon V_{1,3n+4}\to V_{2,2n+2}$ is $1$-dimensional, 
which then implies its surjectivity. 
We first note that $5n+6$ and $3n+4$ are coprime by the Euclidean algorithm. 
By Lemma \ref{pre-apolar non-dege}, the bilinear map 
\begin{equation*}
T^{(3n+3)} : {\C}X^{i}Y^{5n+4-i} \times {\C}X^{3n+4-j}Y^{j} \to {\C}X^{i-j+1}Y^{2n+1-i+j}
\end{equation*}
is non-degenerate whenever the indices are non-negative. 
Now suppose a vector $w'=G_+(X, Y)x+G_-(X, Y)y$ in $V_{1,3n+4}$ satisfies $T(v, w')=0$. 
This is rewritten as 
\begin{eqnarray*}
T^{(3n+3)}(X^{3n+3}Y^{2n+1}, G_-) &=& b_0T^{(3n+3)}(X^{4n+4}Y^{n}, G_+),  \\ 
T^{(3n+3)}(X^{4n+4}Y^{n}, G_-)       &=& b_1T^{(3n+3)}(X^{2n+1}Y^{3n+3}, G_+),  \\ 
T^{(3n+3)}(X^{2n+1}Y^{3n+3}, G_-) &=& b_2T^{(3n+3)}(X^{n}Y^{4n+4}, G_+),  
\end{eqnarray*}
for some constants $b_j$. 
Expanding 
$G_{\pm}(X, Y)=\sum_{j=0}^{3n+4}\alpha_{j}^{\pm}X^{3n+4-j}Y^{j}$, 
we obtain the relation 
\begin{eqnarray*}\notag
\alpha_{j+n+1}^+  =  c_{1j}\alpha_j^- \: \: \:  (n+2 \leq j \leq 2n+3),  &  &  \alpha_j^- = 0 \: \: \:   (2n+4 \leq j \leq 3n+4),  \\ 
\alpha_j^+ =  c_{2j}\alpha_{j+2n+3}^- \: \: \:   (0 \leq j \leq n+1),      &  & \alpha_j^+ = 0 \: \: \:   (n+2 \leq j \leq 2n+2),   \\ 
\alpha_j^+  =  c_{3j}\alpha_{j+n+1}^-  \: \: \:   (0 \leq j \leq n+1),  &  & \alpha_j^- = 0 \: \: \:   (0\leq j \leq n), 
\end{eqnarray*}
where $c_{\ast}$ are suitable non-zero constants. 
This is reduced to the relations 
\begin{equation*}
\alpha_0^+=d_1\alpha_{n+1}^-=d_2\alpha_{2n+3}^-=d_3\alpha_{3n+4}^+
\end{equation*} 
for some constants $d_j$, 
and $\alpha_i^{\pm}=0$ for other $i$. 
This proves our claim. 

On the other hand, the surjectivity of   
$T(\cdot, w)\colon V_{3,5n+4}\to V_{2, 2n+2}$ 
follows by noticing that 
\begin{equation*}
T(V_{5n+4}x^3, w) = V_{2n+2}x^2, \qquad 
T(V_{5n+4}y^3, w) = V_{2n+2}y^2, 
\end{equation*}
\begin{equation*}
T(V_{5n+4}xy^2, (X^{3n+4}+Y^{3n+4})x) = V_{2n+2}xy. 
\end{equation*}


\subsection{The case $b=7$}\label{ssec:7}

We choose the following vectors of $V_{3,7}$ and $V_{3,3}$: 
\begin{eqnarray*}\notag
v &=& \binom{7}{3}X^3Y^4x^3 - 9Y^7x^2y + \binom{7}{1}X^6Yxy^2 + \binom{7}{3}X^4Y^3y^3,  \\ 
w &=& Y^3x^3 + X^3xy^2 + (XY^2+Y^3)y^3. 
\end{eqnarray*}

We leave it to the reader to check that $w$ spans the kernel of  
$T(v, \cdot):V_{3,3}\to V_{2,4}$ 
(cf.~\S \ref{ssec:1} and \S \ref{ssec:4}). 
We shall show that $T(\cdot, w)\colon V_{3,7}\to V_{2,4}$ is surjective too. 
First note that the bilinear map 
\begin{equation*}
T^{(2)} : {\C}x^iy^{3-i}\times{\C}x^{3-j}y^j \to {\C}x^{i-j+1}y^{j-i+1}
\end{equation*}
is non-degenerate whenever the indices are non-negative, 
for $3$ and $5$ are coprime (Lemma \ref{pre-apolar non-dege}).
Then we have 
\begin{equation*}
T(V_7y^3, w) = T(V_7y^3, Y^3x^3) = V_4xy. 
\end{equation*}
Since $T^{(3)}(V_7, X^3)=V_4$, 
we have $T(V_7x^3, w)\subset V_4x^2\oplus V_4xy$ 
with surjective projection $T(V_7x^3, w)\to V_4x^2$.  
Therefore $V_4x^2$ is also contained in the image of $T(\cdot, w)$. 
Finally, since $T(V_7xy^2, X^3xy^2)=V_4y^2$,  
the space $V_4y^2$ is contained in the image too.


\end{document}